\newtheorem{thm}{Theorem}[section]
\newtheorem{cor}[thm]{Corollary}
\newtheorem{lem}[thm]{Lemma}
\newtheorem{prop}[thm]{Proposition}
\theoremstyle{definition}
\newtheorem{defin}[thm]{Definition}
\newtheorem{rem}[thm]{Remark}
\numberwithin{equation}{section}
\newcommand{\R}{\mathbb R} 
\newcommand{\N}{\mathbb N} 
\newcommand{\topf}{\mathsf{t}} 
\newcommand{\botf}{\mathsf{b}} 
\newcommand{\var}{\mathsf{var}} 
\newcommand{\dH}{\mathsf{d_H}} 
\newcommand{\cht}{\mathds 1} 
\newcommand{\abs}[1]{\left\lvert#1\right\rvert} 
\newcommand{\norm}[1]{\left\lVert#1\right\rVert} 
\newcommand{\iprod}[1]{\left\langle#1\right\rangle} 
\newcommand{\closure}{\operatorname{cl}} 
\newcommand{\Exp}{\operatorname{Exp}} 
\newcommand{\Log}{\operatorname{Log}} 
\begin{document}


\baselineskip=17pt



\title{The horofunction boundary of finite-dimensional $\ell_p$ spaces}

\author[A. W. Guti{é}rrez]{Armando W. Guti{é}rrez}
\address{Department of Mathematics and Systems Analysis\\ Aalto University\\ Otakaari 1 Espoo, Finland}
\email{wladimir.gutierrez@aalto.fi}

\date{}

\begin{abstract}
	We give a complete description of the horofunction boundary of finite-dimensional 
	$\ell_p$ spaces, for $1\leq p\leq \infty$. We also study the variation norm 
	on $\R^{\mathcal{N}}$, $\mathcal{N}=\{1,...,N\}$, 
	and the corresponding horofunction boundary. As a consequence, 
	we describe the horofunctions for Hilbert's projective metric on 
	the interior of the standard cone $\R^{\mathcal{N}}_{+}$ of $\R^{\mathcal{N}}$.
\end{abstract}

\subjclass[2010]{Primary 51F99, 51B20, 52A21; Secondary 46B20}

\keywords{horofunction boundary, metric spaces, $\ell_p$ norms, variation norm, Hilbert's projective metric}

\maketitle

\section{Introduction}
There has recently been growing interest in the horofunction boundary of 
metric spaces. It is a powerful tool in the study of self-mappings of convex 
cones \cite{Gaubert_Vigeral2012,Karlsson2014} and random walks on groups 
\cite{Karlsson_Ledrappier2011}. The horofunction boundary has been studied 
mainly in spaces of nonpositive curvature since the introduction of the 
notion by Gromov \cite{Gromov1981}. By applying methods of convex analysis, Walsh \cite{Walsh2007} 
describes the horofunctions of general finite-dimensional normed spaces. 
Afterwards, in \cite{Walsh2008} he gives a description of the horofunction boundary 
of Hilbert's projective metric on general finite-dimensional cones.
In an earlier paper \cite{Karlsson_Metz_Noskov2006} polyhedral normed spaces and 
Hilbert's projective metric on simplicial cones were studied.

Let $1\leq p\leq \infty$ and let $\mathcal{N}=\{1,...,N\}$ for any $N\in\N$. 
Throughout, we shall denote by $\ell_p(\mathcal{N},\R)$ the vector 
space $\R^{\mathcal{N}}$ endowed with the norm
\[
\norm{x}_{p}=\begin{cases}
					\left( \sum_{i\in\mathcal{N}}\abs{x_i}^{p} \right)^{1/p}, & 1\leq p < \infty, \\
					\max_{i\in\mathcal{N}}\abs{x_i}, & p=\infty,
				\end{cases}
\]
for all $x=(x_i)_{i\in\mathcal{N}}\in\R^{\mathcal{N}}$. We shall also denote by 
$\ell_{\var}(\mathcal{N},\R)$ the vector space $\R^{\mathcal{N}}$ endowed with 
the pseudo-norm 
\[
	\norm{x}_\var=\max_{i\in\mathcal{N}}x_i - \min_{i\in\mathcal{N}}x_i.
\]

The purpose of this paper is to give an explicit and detailed description of
the horofunction boundary of $\ell_p(\mathcal{N},\R)$, for all $1\leq p\leq \infty$. 
We also give a complete description of the horofunction boundary of the pseudo-normed space 
$\ell_{\var}(\mathcal{N},\R)$. As a consequence, we readily obtain the horofunctions for
Hilbert's projective metric on the interior of the standard cone $\R^{\mathcal{N}}_{+}$ 
of $\R^{\mathcal{N}}$. 

We would like to emphasize that the techniques we use in this paper are 
significantly different from those used by Walsh. Our results contain explicit 
formulas for the horofunctions. This paper is organized as follows.
In \Cref{horol1N} we give a complete description of the horofunctions on 
$\ell_1(\mathcal{N},\R)$. In \Cref{horolpN} we show that if $1 < p < \infty$ then the 
horofunction boundary of $\ell_p(\mathcal{N},\R)$ is precisely the set of all norm one 
linear functionals on $\ell_p(\mathcal{N},\R)$. In \Cref{horolinfN} we give a complete 
description of the horofunctions on $\ell_{\infty}(\mathcal{N},\R)$. In \Cref{horolvarN} 
we give a complete description of the horofunction boundary of $\ell_{\var}(\mathcal{N},\R)$, 
and consequently we obtain all the horofunctions for Hilbert's projective metric on 
the interior of the standard cone $\R^{\mathcal{N}}_{+}$ of $\R^{\mathcal{N}}$. 
As an application of the latter result, we give a new proof of Perron's theorem.

\section{Preliminaries}
\subsection{The horofunction boundary of a metric space}
Let $(X,d)$ be a metric space. Fix an arbitrary \emph{base point} $b$ in $X$.
Define the mapping $\tau_d: X \to \R^{X}$ by associating to any $y\in X$
the function $\tau_d(y)$ given by
\begin{equation}\label{emb}
	\tau_d(y)(x):=d(x,y)-d(b,y)
\end{equation}
for all $x$ in $X$. For each $y\in X$, the function $\tau_d(y)$ is bounded from 
below by $-d(b,y)$ and, moreover, is $1$-Lipschitz with respect to the metric $d$. 
In fact, by the triangle inequality it follows that  
\begin{align*}
	\abs{\tau_d(y)(x)-\tau_d(y)(z)} &=\abs{d(x,y)-d(b,y)-d(z,y)+d(b,y)} \\
			&=\abs{d(x,y)-d(z,y)} \\
			&\leq d(x,z)
\end{align*}
for all $x,z\in X$. Furthermore, by taking $z=b$ we get $\abs{\tau_d(y)(x)}\leq d(x,b)$
for all $x\in X$. Hence
\[
	\tau_d(X)\subset \prod_{x\in X}[-d(x,b),d(x,b)]\subset \R^{X}.
\]
By Tychonoff's theorem the product space $\prod_{x\in X}[-d(x,b),d(x,b)]$
is compact in the product topology. Therefore the set $\tau_d(X)$
has compact closure in this topology, which is equivalent to the
topology of pointwise convergence. 

\begin{defin}
We denote by $\overline{X}^{H}:=\closure(\tau_d(X))$ the \emph{horofunction compactification} 
of $(X,d)$. The \emph{horofunction boundary} of $(X,d)$ is defined by 
\begin{equation}\label{horobnd}
	\partial_H X:=\overline{X}^{H}\setminus\tau_d(X).
\end{equation}
The elements of $\partial_H X$ are called \emph{horofunctions} for the metric $d$ on $X$. 
For each $r\in\R$, the sublevel set $\mathcal{H}(h,r):=\{x\in X \mid h(x)\leq r\}$ is called 
a \emph{horoball} centered at $h\in\partial_H X$.
\end{defin}
\begin{rem}
The mapping $y\mapsto\tau_d(y)$ is injective and continuous in the product topology. 
If $(X,d)$ is \emph{proper}, i.e., every closed ball is compact, then the 
mapping $y\mapsto\tau_d(y)$ defines an embedding $X\hookrightarrow\overline{X}^{H}$. 
By identifying $X$ with $\tau_d(X)$, the horofunction boundary (\ref{horobnd}) 
becomes $\partial_H X=\overline{X}^{H}\setminus X$. The choice of the base 
point $b\in X$ is irrelevant, in the sense that horofunction boundaries of $(X,d)$ 
for different base points are homeomorphic. 
We refer to \cite{Ballmann_Gromov_Schroeder1985,Bridson_Haefliger1999,Rieffel2002} 
for further details.
\end{rem}
\begin{rem}
If $X$ is a normed space with norm $\norm{\cdot}$, 
then we choose the base point $b=0\in X$, and so (\ref{emb}) becomes 
$\tau(y)(x)=\norm{x-y}-\norm{y}$. Moreover, if $X$ is finite-dimensional, 
then $(X,\norm{\cdot})$ is a proper metric space and hence any $h\in\overline{X}^{H}$
can be written as $h(x)=\lim_{n\to\infty}\tau(y^{n})(x)$, for all $x\in X$ and 
for some sequence $\{y^{n}\}_{n\in\N}$ in $X$. 
\end{rem}

It is well-known that the horofunction boundary of $\ell_1(\{1\},\R):=(\R,\abs{\cdot})$ 
has exactly two elements. More precisely, by considering unbounded sequences 
$\{y^{n}\}_{n\in\N}$ of real numbers, one obtains 
\begin{equation}\label{l11horo}
	\partial_H \ell_1(\{1\},\R)= \left\lbrace 
						x\mapsto h_{\epsilon} (x)=\epsilon x
							\mathrel{\big\vert} 
							\begin{aligned}							
								\epsilon\in\{-1,+1\}
							 \end{aligned} 	
						\right\rbrace.
\end{equation}
In the following sections we describe the horofunction boundary of $\ell_p(\mathcal{N},\R)$ 
for all $1 \leq p \leq \infty$. Throughout we shall denote 
$\tau_p(y)(x)=\norm{x-y}_p-\norm{y}_p$, where $x,y\in\ell_p(\mathcal{N},\R)$.

\section{The horofunction boundary of $\ell_1(\mathcal{N},\R)$}\label{horol1N}
\begin{lem}\label{ThmAl1}
Let $N\geq 2$ and $\mathcal{N}=\{1,...,N\}$. Let $\{y^{n}\}_{n\in\N}$ be a 
sequence in $\ell_1(\mathcal{N},\R)$ such that $\norm{y^{n}}_1 \to \infty$ 
as $n \to \infty$. Then there exists $\emptyset\subsetneq\mathcal{I}\subseteq\mathcal{N}$
such that the sequence of functions $\{\tau_{1}(y^{n})\}_{n\in\N}$ has a
subsequence which converges pointwise to the function
\[
	x\mapsto h_{\epsilon,\mu}^{\mathcal{I}}(x):=\sum_{i\in\mathcal{I}}\epsilon_ix_i + 
		\sum_{i\in\mathcal{N}\setminus\mathcal{I}}(\abs{x_i-\mu_i}-\abs{\mu_i}),
\]
where $\epsilon=(\epsilon_i)_i\in\{-1,+1\}^{\mathcal{I}}$ and $\mu=(\mu_i)_i\in\R^{\mathcal{N}\setminus\mathcal{I}}$.
\end{lem}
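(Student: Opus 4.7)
The plan is to exploit the separable structure of the $\ell_1$ norm. Since
\[
	\tau_1(y^n)(x)=\sum_{i\in\mathcal{N}}\bigl(\abs{x_i-y^n_i}-\abs{y^n_i}\bigr),
\]
the behavior of $\tau_1(y^n)$ decomposes into $N$ one-dimensional contributions, one per coordinate. So first I would analyze each coordinate sequence $\{y^n_i\}_{n\in\N}$ separately, and then stitch the coordinates back together by a finite diagonal extraction.

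For a fixed $i\in\mathcal{N}$, the sequence $\{y^n_i\}$ either admits a bounded subsequence or satisfies $\abs{y^n_i}\to\infty$. In the bounded case, Bolzano--Weierstrass gives a further subsequence with $y^n_i\to \mu_i$ for some $\mu_i\in\R$, and continuity of $t\mapsto\abs{x_i-t}-\abs{t}$ yields the coordinatewise limit $\abs{x_i-\mu_i}-\abs{\mu_i}$. In the unbounded case, I extract a further subsequence with $y^n_i\to+\infty$ or $y^n_i\to-\infty$; for $n$ large enough, $y^n_i$ lies on one side of $x_i$ and the difference telescopes to $-x_i$ or $+x_i$ respectively. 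Defining $\epsilon_i:=+1$ if $y^n_i\to-\infty$ and $\epsilon_i:=-1$ if $y^n_i\to+\infty$, the coordinatewise limit is $\epsilon_i x_i$. Iterating this extraction over the $N$ coordinates in turn produces a single subsequence (still indexed by $n$ for simplicity) that works simultaneously for every index.

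Let $\mathcal{I}\subseteq\mathcal{N}$ be the set of indices where the extracted subsequence is unbounded and $\mathcal{N}\setminus\mathcal{I}$ the set where it converges to $\mu_i\in\R$. Because $\norm{y^n}_1=\sum_i\abs{y^n_i}\to\infty$ while $\{y^n_i\}_{i\in\mathcal{N}\setminus\mathcal{I}}$ remains bounded, at least one coordinate must diverge, hence $\mathcal{I}\neq\emptyset$. Summing the coordinatewise limits gives precisely
\[
	\tau_1(y^n)(x)\longrightarrow \sum_{i\in\mathcal{I}}\epsilon_i x_i + \sum_{i\in\mathcal{N}\setminus\mathcal{I}}\bigl(\abs{x_i-\mu_i}-\abs{\mu_i}\bigr)=h_{\epsilon,\mu}^{\mathcal{I}}(x)
\]
for every $x\in\ell_1(\mathcal{N},\R)$, which is the desired pointwise convergence.

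There is no serious obstacle: the proof reduces to finitely many applications of Bolzano--Weierstrass together with the pointwise identity above. The only point that requires care is the bookkeeping of signs in the divergent coordinates, so I would state the sign convention for $\epsilon_i$ explicitly before computing the limit.
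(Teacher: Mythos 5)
Your proposal is correct and follows essentially the same route as the paper: split the coordinates into those where $\abs{y^n_i}\to\infty$ and those with a bounded (hence convergent) subsequence, iterate the extraction over the finitely many coordinates, and sum the one-dimensional limits, noting that $\norm{y^n}_1\to\infty$ forces $\mathcal{I}\neq\emptyset$. You in fact spell out the sign bookkeeping for the divergent coordinates slightly more explicitly than the paper, which simply invokes the known description of $\partial_H\ell_1(\{1\},\R)$.
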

\begin{proof}
Let $\{y^{n}\}_{n\in\N}$ be a sequence in $\ell_1(\mathcal{N},\R)$ such that 
$\norm{y^{n}}_1 \to \infty$, as $n \to \infty$.
By taking subsequences, we can find $\emptyset\subsetneq\mathcal{I}\subseteq\mathcal{N}$ such that
$\abs{y_i^{n}}\to  \infty$, as $n \to \infty$, for all $i\in\mathcal{I}$, and
$\{y_i^{n}\}_{n\in\N}\subset\R$ is bounded for all $i\in\mathcal{N}\setminus\mathcal{I}$.
By applying Cantor's diagonal argument and (\ref{l11horo}), we find a
further subsequence such that for every $x\in\ell_1(\mathcal{N},\R)$,
\begin{align*}
	\tau_{1}(y^{n})(x) &=\sum_{i\in\mathcal{N}}\abs{x_i-y_i^{n}} - \sum_{i\in\mathcal{N}}\abs{y_i^{n}} \\
						&= \sum_{i\in\mathcal{I}} \left( \abs{x_i-y_i^{n}}-\abs{y_i^{n}} \right)  +
						\sum_{i\in\mathcal{N}\setminus\mathcal{I}} \left( \abs{x_i-y_i^{n}}-\abs{y_i^{n}} \right)\\
						&\xrightarrow[n \to \infty]{} \sum_{i\in\mathcal{I}}\epsilon_ix_i + 
						\sum_{i\in\mathcal{N}\setminus\mathcal{I}}(\abs{x_i-\mu_i}-\abs{\mu_i}),
\end{align*}
where $\epsilon=(\epsilon_i)_i\in\{-1,+1\}^{\mathcal{I}}$ and $\mu=(\mu_i)_i\in\R^{\mathcal{N}\setminus\mathcal{I}}$.
\end{proof}
\begin{thm}\label{ThmBl1}
Let $N\geq 2$ and $\mathcal{N}=\{1,...,N\}$. The horofunction boundary of the metric 
space $\ell_1(\mathcal{N},\R)$ is given by 
\begin{equation}\label{l1horoset}
	\partial_H \ell_1(\mathcal{N},\R)= \left\lbrace 
						x\mapsto h_{\epsilon,\mu}^{\mathcal{I}}(x)
							\mathrel{\bigg\vert} 
							  \begin{aligned}						
								&\emptyset\subsetneq\mathcal{I}\subseteq\mathcal{N},\;
								\epsilon\in\{-1,+1\}^{\mathcal{I}},\\ 
								&\mu\in\R^{\mathcal{N}\setminus\mathcal{I}}	
							  \end{aligned}	
						\right\rbrace,
\end{equation}
where $h_{\epsilon,\mu}^{\mathcal{I}}(x)=\sum_{i\in\mathcal{I}}\epsilon_ix_i + 
		\sum_{i\in\mathcal{N}\setminus\mathcal{I}}(\abs{x_i-\mu_i}-\abs{\mu_i})$ 
		for all $x\in\ell_1(\mathcal{N},\R)$.
\end{thm}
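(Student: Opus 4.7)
The plan is to prove the two inclusions separately. The forward inclusion $\partial_H \ell_1(\mathcal{N},\R)\subseteq\{h_{\epsilon,\mu}^{\mathcal{I}}\}$ reduces directly to the preceding lemma, once one checks that any representing sequence must be unbounded. The reverse inclusion requires exhibiting, for each admissible triple $(\mathcal{I},\epsilon,\mu)$, an explicit sequence whose $\tau_1$-image converges to $h_{\epsilon,\mu}^{\mathcal{I}}$, together with a verification that this limit is not already in $\tau_1(\R^{\mathcal{N}})$.

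For the forward direction, I would pick $h\in\partial_H\ell_1(\mathcal{N},\R)$ and, using properness of $\ell_1(\mathcal{N},\R)$, write $h=\lim_{n\to\infty}\tau_1(y^{n})$ pointwise for some sequence $\{y^{n}\}\subset\R^{\mathcal{N}}$. The key observation is that $\{y^{n}\}$ cannot admit a bounded subsequence: in finite dimensions any such subsequence has a sub-subsequence $y^{n_k}\to y^{*}$, and the continuity of $y\mapsto\tau_1(y)$ in the product topology forces $h=\tau_1(y^{*})\in\tau_1(\R^{\mathcal{N}})$, contradicting $h\in\partial_H$. Hence $\norm{y^{n}}_1\to\infty$ along some subsequence, and \Cref{ThmAl1} produces the required $\mathcal{I},\epsilon,\mu$.

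For the reverse direction, given $\emptyset\subsetneq\mathcal{I}\subseteq\mathcal{N}$, $\epsilon\in\{-1,+1\}^{\mathcal{I}}$ and $\mu\in\R^{\mathcal{N}\setminus\mathcal{I}}$, I would set $y_i^{n}=n\epsilon_i$ for $i\in\mathcal{I}$ and $y_i^{n}=\mu_i$ for $i\in\mathcal{N}\setminus\mathcal{I}$. Then $\norm{y^{n}}_1\to\infty$, and for each fixed $x\in\R^{\mathcal{N}}$ and sufficiently large $n$ one has $\abs{x_i-n\epsilon_i}-\abs{n\epsilon_i}=-\epsilon_i x_i$ for $i\in\mathcal{I}$, while the contributions from $i\in\mathcal{N}\setminus\mathcal{I}$ are already equal to $\abs{x_i-\mu_i}-\abs{\mu_i}$; summing yields $\tau_1(y^{n})(x)\to h_{\epsilon,\mu}^{\mathcal{I}}(x)$. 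It remains to rule out $h_{\epsilon,\mu}^{\mathcal{I}}\in\tau_1(\R^{\mathcal{N}})$. Here I would use that every $\tau_1(y)$ is bounded below by $-\norm{y}_1$, whereas, because $\mathcal{I}\neq\emptyset$, $h_{\epsilon,\mu}^{\mathcal{I}}$ is unbounded below: evaluating on $x$ with $x_i=-t\epsilon_i$ for $i\in\mathcal{I}$ and $x_i=0$ otherwise gives a value tending to $-\infty$ as $t\to+\infty$, so the two functions cannot coincide.

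The only genuinely subtle step is this last one, showing that the candidate horofunctions are not secretly interior points of $\tau_1(\R^{\mathcal{N}})$; everything else is a direct application of \Cref{ThmAl1} and an elementary diagonal construction. The boundedness-from-below argument for $\tau_1(y)$ versus the presence of linear terms $\epsilon_i x_i$ in $h_{\epsilon,\mu}^{\mathcal{I}}$ is what makes the non-emptiness hypothesis $\mathcal{I}\neq\emptyset$ (built into \Cref{ThmAl1}) essential, and completes the identification of the horofunction boundary.
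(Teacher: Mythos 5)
Your proof is essentially the paper's: the forward inclusion via \Cref{ThmAl1}, an explicit escaping sequence for the reverse inclusion, and the lower bound $\tau_1(z)\geq-\norm{z}_1$ played against the unboundedness below of $h_{\epsilon,\mu}^{\mathcal{I}}$ to exclude membership in $\tau_1(\R^{\mathcal{N}})$; you even supply the properness/compactness argument for why a representing sequence must be unbounded, which the paper leaves implicit. The one slip is a sign: with your choice $y_i^{n}=n\epsilon_i$ on $\mathcal{I}$ you correctly compute $\abs{x_i-n\epsilon_i}-\abs{n\epsilon_i}\to-\epsilon_i x_i$, so your sequence converges to $h_{-\epsilon,\mu}^{\mathcal{I}}$ rather than $h_{\epsilon,\mu}^{\mathcal{I}}$. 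This is harmless for the set equality, since $\epsilon\mapsto-\epsilon$ permutes the parameter family, but to land on the prescribed function you should take $y_i^{n}=-n\epsilon_i$, as the paper does.
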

\begin{proof}
Suppose that $h\in\partial_H \ell_1(\mathcal{N},\R)$. Then there exists a sequence $\{y^{n}\}_{n\in\N}$ in
$\ell_1(\mathcal{N},\R)$ with $\norm{y^{n}}_1 \to \infty$ such that $\{\tau_{1}(y^{n})\}_{n\in\N}$ converges pointwise
to $h$ as $n \to \infty$.
By \Cref{ThmAl1} there exist
$\emptyset\subsetneq\mathcal{I}\subseteq\mathcal{N}$,
$\epsilon\in\{-1,+1\}^{\mathcal{I}}$ and $\mu\in\R^{\mathcal{N}\setminus\mathcal{I}}$
such that there is a subsequence $\{\tau_{1}(y^{n_k})\}_{k}$ that converges pointwise to $ h_{\epsilon,\mu}^{\mathcal{I}}$ as 
$k \to \infty $. Therefore $h=h_{\epsilon,\mu}^{\mathcal{I}}$ and so $\partial_H \ell_1(\mathcal{N},\R)$ is contained 
in the set on the right-hand side of (\ref{l1horoset}).

For the other inclusion, assume that $\mathcal{I}$ is any nonempty subset of $\mathcal{N}$. 
Let $\epsilon\in\{-1,+1\}^{\mathcal{I}}$ and let $\mu\in\R^{\mathcal{N}\setminus\mathcal{I}}$.
We will show that the function $h_{\epsilon,\mu}^{\mathcal{I}}$ belongs to 
$\overline{\ell_1(\mathcal{N},\R)}^{H}\setminus\tau_1(\ell_1(\mathcal{N},\R))$.
Indeed, for each $n$ define $y^{n}=(y_{i}^{n})_{i\in\mathcal{N}}$ in 
$\ell_1(\mathcal{N},\R)$ by
\begin{equation}\label{escapevector}
	y_i^{n}=\begin{cases} -\epsilon_i n, & i\in\mathcal{I}, \\
							\mu_i,  & i\in\mathcal{N}\setminus\mathcal{I}.
				\end{cases}		
\end{equation}
Then for every $x\in\ell_1(\mathcal{N},\R)$ we have
\begin{align*}
	\tau_{1}(y^{n})(x) &=\sum_{i\in\mathcal{I}} \left( \abs{x_i-y_i^{n}}-\abs{y_i^{n}} \right)  +
						\sum_{i\in\mathcal{N}\setminus\mathcal{I}} \left( \abs{x_i-y_i^{n}}-\abs{y_i^{n}} \right)\\
						&=\sum_{i\in\mathcal{I}} \left( \abs{x_i+\epsilon_i n}-n \right)  +
						\sum_{i\in\mathcal{N}\setminus\mathcal{I}} \left( \abs{x_i-\mu_i}-\abs{\mu_i} \right)\\
						&\xrightarrow[n \to \infty]{} \sum_{i\in\mathcal{I}}\epsilon_ix_i + 
						\sum_{i\in\mathcal{N}\setminus\mathcal{I}} \left( \abs{x_i-\mu_i}-\abs{\mu_i} \right) 
						= h_{\epsilon,\mu}^{\mathcal{I}}(x).
\end{align*}
Therefore $h_{\epsilon,\mu}^{\mathcal{I}}\in\overline{\ell_1(\mathcal{N},\R)}^{H}$.
It remains to show that $h_{\epsilon,\mu}^{\mathcal{I}}$ is not an element of $\tau_1(\ell_1(\mathcal{N},\R))$.
Suppose the contrary, so there exists $z\in\ell_1(\mathcal{N},\R)$ such that $h_{\epsilon,\mu}^{\mathcal{I}}=\tau_{1}(z)$. 
It follows by (\ref{escapevector}) that 
\[
	h_{\epsilon,\mu}^{\mathcal{I}}(y^{n})=-n\abs{\mathcal{I}} - \sum_{i\in\mathcal{N}\setminus\mathcal{I}}\abs{\mu_i}\xrightarrow[n \to \infty]{}-\infty.
\]
However, by (\ref{emb}) we know that $\tau_{1}(z)$ is bounded from below by $-\norm{z}_1$, and hence
\[
	\liminf_{n\to\infty}\tau_{1}(z)(y^{n})\geq -\norm{z}_1 >  -\infty,
\]
which is a contradiction. Therefore $h_{\epsilon,\mu}^{\mathcal{I}}$ belongs to $\partial_H \ell_1(\mathcal{N},\R)$, that is,
every element of the set on the right-hand side of (\ref{l1horoset}) is a horofunction on $\ell_1(\mathcal{N},\R)$.
\end{proof}

\section{The horofunction boundary of $\ell_p(\mathcal{N},\R)$ for $1<p<\infty$}\label{horolpN}
Recall that a normed space $(X,\norm{\cdot})$ is called \textit{uniformly convex} 
if for every $\epsilon\in]0,2]$ there exists $\delta>0$ such that
$\norm{x+y}\leq 2(1-\delta)$ whenever $x,y\in X$ with $\norm{x}=\norm{y}=1$
and $\norm{x-y}\geq\epsilon$. A well-known result due to Clarkson \cite{Clarkson1936} 
is that $L_{p}$ and $\ell_p$ spaces are uniformly convex for $1<p<\infty$. It will be convenient
to use the following equivalent characterization of uniform convexity.
\begin{prop}({\cite[p.~287]{INF_DIM_GEOM2001}})\label{uniconvex}
A Banach space $(X,\norm{\cdot})$ is uniformly convex if and only if 
$\norm{x_n-y_n}\to 0$, as $n\to \infty$, whenever $x_n,y_n\in X$ 
with $\norm{x_n}\leq 1,\norm{y_n}\leq 1$ for all $n\in\N$, and $\norm{x_n+y_n}\to 2$ as $n\to \infty$.
\end{prop}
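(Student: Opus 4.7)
The plan is to prove the two implications separately; the standard $(\epsilon,\delta)$ definition of uniform convexity only speaks about unit vectors, while the sequential characterization allows $\norm{x_n},\norm{y_n}\leq 1$, so the main work will be a normalization argument.

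For the forward direction, I would assume $(X,\norm{\cdot})$ is uniformly convex and let $\{x_n\}, \{y_n\}$ satisfy $\norm{x_n},\norm{y_n}\leq 1$ with $\norm{x_n+y_n}\to 2$. The chain $\norm{x_n+y_n}\leq \norm{x_n}+\norm{y_n}\leq 2$ forces $\norm{x_n}+\norm{y_n}\to 2$, and combined with the individual upper bounds this yields $\norm{x_n},\norm{y_n}\to 1$. I would then set $\tilde x_n:=x_n/\norm{x_n}$ and $\tilde y_n:=y_n/\norm{y_n}$, well-defined for large $n$, and use the estimate
\[
\norm{\tilde x_n+\tilde y_n-(x_n+y_n)}\leq \abs{1-\norm{x_n}}+\abs{1-\norm{y_n}}\xrightarrow[n\to\infty]{} 0
\]
to conclude $\norm{\tilde x_n+\tilde y_n}\to 2$. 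Now one applies the contrapositive of the $(\epsilon,\delta)$ definition to the unit vectors $\tilde x_n,\tilde y_n$: if $\norm{\tilde x_n-\tilde y_n}$ failed to converge to zero, some subsequence would exceed a fixed $\epsilon>0$, which by uniform convexity would bound $\norm{\tilde x_n+\tilde y_n}$ above by $2(1-\delta)<2$, contradicting $\norm{\tilde x_n+\tilde y_n}\to 2$. Reversing the renormalization by the triangle inequality then yields $\norm{x_n-y_n}\to 0$.

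For the converse, I would argue by contrapositive: if uniform convexity fails, there exists $\epsilon_0>0$ such that for each $n\in\N$ one can pick unit vectors $x_n, y_n$ with $\norm{x_n-y_n}\geq\epsilon_0$ and $\norm{x_n+y_n}>2(1-1/n)$. Combined with $\norm{x_n+y_n}\leq 2$, this produces $\norm{x_n+y_n}\to 2$, which by hypothesis forces $\norm{x_n-y_n}\to 0$, contradicting the lower bound $\epsilon_0$.

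The only genuine subtlety lies in the normalization step of the forward direction: before dividing by $\norm{x_n}$ and $\norm{y_n}$ one must know these sequences are bounded away from zero, which is precisely what the preliminary observation $\norm{x_n},\norm{y_n}\to 1$ provides. Once normalization is justified, everything else reduces to routine manipulation of the triangle inequality.
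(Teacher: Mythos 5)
Your proof is correct and complete: the normalization argument in the forward direction (using $\norm{x_n},\norm{y_n}\to 1$ to justify dividing by the norms) and the contrapositive argument for the converse are both sound, and this is the standard textbook argument for this equivalence. The paper itself gives no proof, citing the result to \cite[p.~287]{INF_DIM_GEOM2001}, so there is nothing to compare against beyond noting that your argument is the expected one.
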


\begin{lem}\label{ThmAlp}
Let $p,q\in ]1,+\infty[$ such that $p^{-1}+q^{-1}=1$. Let $\{y^{n}\}_{n\in\N}$ 
be a sequence in $\ell_p(\mathcal{N},\R)$ 
such that $\norm{y^{n}}_p \to \infty$ as $n\to \infty$. Then there exists 
$\mu\in \ell_q(\mathcal{N},\R)$ with $\norm{\mu}_q=1$ for which the sequence of 
functions $\{\tau_{p}(y^{n})\}_{n\in\N}$ has a
subsequence converging pointwise to the function
\[
	x\mapsto h_{\mu}(x):=-\sum_{i\in\mathcal{N}}\mu_ix_i.
\]
\end{lem}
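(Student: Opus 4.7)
The plan is to use the duality between $\ell_p(\mathcal{N},\R)$ and $\ell_q(\mathcal{N},\R)$ together with the uniform convexity of $\ell_q(\mathcal{N},\R)$ via \Cref{uniconvex}. For any $y\in\ell_p(\mathcal{N},\R)\setminus\{0\}$, let $\mu(y)\in\ell_q(\mathcal{N},\R)$ denote the \emph{duality vector} of $y$, given by $\mu(y)_i=\operatorname{sgn}(y_i)\abs{y_i}^{p-1}/\norm{y}_p^{p-1}$; it satisfies $\norm{\mu(y)}_q=1$ and $\iprod{\mu(y),y}=\norm{y}_p$. Since $\norm{y^n}_p\to\infty$, we may assume $y^n\neq 0$ and, for each fixed $x$, also $y^n-x\neq 0$ for all large $n$.

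First, I would sandwich $\tau_p(y^n)(x)$ between two inner products. Applying H\"older's inequality in the form $\iprod{\mu(y^n),y^n-x}\leq\norm{y^n-x}_p$ and using $\iprod{\mu(y^n),y^n}=\norm{y^n}_p$ yields the lower bound $\tau_p(y^n)(x)\geq -\iprod{\mu(y^n),x}$. Writing $y^n=(y^n-x)+x$ inside $\iprod{\mu(y^n-x),y^n}\leq\norm{y^n}_p$ yields the upper bound $\tau_p(y^n)(x)\leq -\iprod{\mu(y^n-x),x}$.

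Next, I would extract a subsequence along which $\mu(y^n)\to\mu$ for some unit vector $\mu\in\ell_q(\mathcal{N},\R)$; this uses compactness of the unit sphere in the finite-dimensional space $\ell_q(\mathcal{N},\R)$. Along this subsequence the lower bound converges to $-\iprod{\mu,x}=h_\mu(x)$, and it remains to show the upper bound does too---equivalently, that $\mu(y^n-x)\to\mu$.

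The main obstacle, and the heart of the argument, is this last convergence, for which I would invoke \Cref{uniconvex} in $\ell_q(\mathcal{N},\R)$. Since $\mu(y^n)$ and $\mu(y^n-x)$ are both unit vectors in $\ell_q(\mathcal{N},\R)$, it suffices to verify that $\norm{\mu(y^n)+\mu(y^n-x)}_q\to 2$. Pairing the sum with the unit vector $y^n/\norm{y^n}_p\in\ell_p(\mathcal{N},\R)$ gives
\[
	\iprod{\mu(y^n)+\mu(y^n-x),\,y^n/\norm{y^n}_p}=1+\frac{\norm{y^n-x}_p+\iprod{\mu(y^n-x),x}}{\norm{y^n}_p},
\]
which tends to $2$ because $\norm{y^n}_p\to\infty$, while both $\abs{\norm{y^n-x}_p-\norm{y^n}_p}$ and $\abs{\iprod{\mu(y^n-x),x}}$ are bounded by $\norm{x}_p$. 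Combined with the trivial upper bound $\norm{\mu(y^n)+\mu(y^n-x)}_q\leq 2$, H\"older forces $\norm{\mu(y^n)+\mu(y^n-x)}_q\to 2$, so \Cref{uniconvex} yields $\mu(y^n-x)-\mu(y^n)\to 0$ in $\ell_q(\mathcal{N},\R)$. Therefore the upper bound of the sandwich also converges to $-\iprod{\mu,x}$, and $\tau_p(y^n)(x)\to h_{\mu}(x)$ pointwise along the extracted subsequence, as claimed.
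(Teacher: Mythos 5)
Your argument is correct and follows essentially the same route as the paper: both rest on $\ell_p/\ell_q$-duality together with uniform convexity of $\ell_q(\mathcal{N},\R)$ (via \Cref{uniconvex}) to show that the supporting functional of $y^n-x$ converges to the same limit as that of $y^n$ itself. The only organizational difference is that you sandwich $\tau_p(y^n)(x)$ between $-\iprod{\mu(y^n),x}$ and $-\iprod{\mu(y^n-x),x}$ and extract the convergent subsequence directly in $\ell_q$, whereas the paper normalizes $y^{n_k}-x$ in $\ell_p$ and passes to the limit in an exact identity; your two-sided bound is a clean way to conclude.
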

\begin{proof}
We may assume without loss of generality that $y^{n}\neq 0$, and define 
$w^{n}:=y^{n}/\norm{y^{n}}_p$ for all $n$. By compactness of the unit sphere of 
$\ell_p(\mathcal{N},\R)$, it follows that there exists a subsequence $\{w^{n_k}\}_k$
that converges, as $k\to\infty$, to some $w\in\ell_p(\mathcal{N},\R)$ with $\norm{w}_p=1$.
Therefore, by $\ell_p/\ell_q$-duality there exists a unique $\mu\in\ell_q(\mathcal{N},\R)$
with $\norm{\mu}_q=1$ such that $\iprod{\mu,w}=1$. 
Now, let $x\in\ell_p(\mathcal{N},\R)$ and for each $k$ define
\begin{equation}\label{unitzk}
	z^{k}:=\frac{y^{n_k}-x}{\norm{x-y^{n_k}}_p}
		=\frac{-x}{\norm{x-y^{n_k}}_p}+\frac{\norm{y^{n_k}}_p}{\norm{x-y^{n_k}}_p}w^{n_k}.
\end{equation}
For each $k$ we have $\norm{z^{k}}_p=1$, and hence by $\ell_p/\ell_q$-duality 
there exists $\varphi^{k}\in\ell_q(\mathcal{N},\R)$ 
with $\norm{\varphi^{k}}_q=1$ such that $\iprod{\varphi^{k},z^{k}}=1$. 
By applying the assumption $\norm{y^{n_k}}_p \to \infty$ to (\ref{unitzk}), we obtain
$\norm{z^{k}-w}_p\to 0$ as $k\to\infty$. Consequently,
\[
	2=\norm{\varphi^{k}}_q+\norm{\mu}_q\geq\norm{\varphi^{k}+\mu}_q\geq \iprod{\varphi^{k}+\mu,z^{k}}=1+\iprod{\mu,z^{k}}
	\xrightarrow[k \to \infty]{}2,
\]
and hence, by \Cref{uniconvex}, we have $\norm{\varphi^{k}-\mu}_q\to 0$ as $k\to\infty$. 
On the other hand, by evaluating each dual pairing of $\varphi^{k}$ at $z^{k}$ in (\ref{unitzk}) we obtain
\begin{equation*}
	\norm{x-y^{n_k}}_p=\iprod{\varphi^{k},-x}+\norm{y^{n_k}}_p\iprod{\varphi^{k},w^{n_k}}.
\end{equation*}
Therefore,
\begin{align*}
	\tau_{p}(y^{n_k})(x)&=\norm{x-y^{n_k}}_p-\norm{y^{n_k}}_p\\
						&=\iprod{\varphi^{k},-x}+\norm{y^{n_k}}_p\iprod{\varphi^{k},w^{n_k}}-\norm{y^{n_k}}_p\\
						&\xrightarrow[k \to \infty]{} -\iprod{\mu,x}=h_{\mu}(x).
\end{align*}
\end{proof}

\begin{thm}\label{ThmBlp}
Let $p,q\in ]1,+\infty[$ such that $p^{-1}+q^{-1}=1$. The horofunction boundary of the metric space $\ell_p(\mathcal{N},\R)$ 
is given by 
\begin{equation}\label{lphoroset}
	\partial_H\ell_p(\mathcal{N},\R)= \left\lbrace 
						x\mapsto h_{\mu}(x)
							\mathrel{\big\vert} 						
								\mu\in\ell_q(\mathcal{N},\R),\; \norm{\mu}_q=1
						\right\rbrace,
\end{equation}
where $h_{\mu}(x)=-\sum_{i\in\mathcal{N}}\mu_ix_i$ for all $x\in\ell_p(\mathcal{N},\R)$.
\end{thm}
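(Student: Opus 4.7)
My plan is to prove the two inclusions defining the horofunction boundary, using \Cref{ThmAlp} for one direction and an explicit escape construction for the other, mirroring the structure of the $\ell_1$ case in \Cref{ThmBl1}.

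For the inclusion $\partial_H\ell_p(\mathcal{N},\R) \subseteq \{h_\mu : \norm{\mu}_q = 1\}$, I would take $h \in \partial_H\ell_p(\mathcal{N},\R)$ and use properness of the finite-dimensional space $\ell_p(\mathcal{N},\R)$ to write $h = \lim_n \tau_p(y^n)$ pointwise for some sequence $\{y^n\}_{n\in\N}$; since $h$ is not of the form $\tau_p(y)$, the sequence must satisfy $\norm{y^n}_p \to \infty$. \Cref{ThmAlp} then supplies a subsequence converging pointwise to some $h_\mu$ with $\norm{\mu}_q = 1$, forcing $h = h_\mu$ by uniqueness of pointwise limits.

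For the reverse inclusion, fix $\mu \in \ell_q(\mathcal{N},\R)$ with $\norm{\mu}_q = 1$. By compactness of the unit sphere of $\ell_p(\mathcal{N},\R)$ (together with the $\ell_p/\ell_q$-duality), there exists $w \in \ell_p(\mathcal{N},\R)$ with $\norm{w}_p = 1$ and $\iprod{\mu,w} = 1$. I would take as escape vector $y^n := nw$, in the spirit of \eqref{escapevector}. Then $\norm{y^n}_p = n \to \infty$ and $y^n/\norm{y^n}_p = w$ is constant in $n$. Applying \Cref{ThmAlp} to this sequence produces pointwise convergence $\tau_p(y^n) \to h_{\tilde\mu}$ for the unique $\tilde\mu \in \ell_q(\mathcal{N},\R)$ with $\norm{\tilde\mu}_q = 1$ and $\iprod{\tilde\mu,w} = 1$; the uniform convexity of $\ell_q(\mathcal{N},\R)$ (equivalently, uniform smoothness of $\ell_p(\mathcal{N},\R)$) guarantees that $\tilde\mu = \mu$, so $\tau_p(y^n) \to h_\mu$.

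Finally, to conclude $h_\mu \in \partial_H\ell_p(\mathcal{N},\R)$ rather than merely in $\overline{\ell_p(\mathcal{N},\R)}^{H}$, I would copy the boundedness argument from \Cref{ThmBl1}: each $\tau_p(z)$ is bounded below by $-\norm{z}_p$, but $h_\mu(tw) = -t \to -\infty$ as $t \to +\infty$, so $h_\mu$ is unbounded below and cannot equal any $\tau_p(z)$. I expect no serious obstacle here, since the substantive analytic work already sits in \Cref{ThmAlp}; the only technical point requiring care is the uniqueness of the norming functional, which identifies the $\mu$ produced by the lemma with the $\mu$ chosen at the outset.
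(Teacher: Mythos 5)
Your proposal is correct and follows essentially the same route as the paper: the first inclusion via \Cref{ThmAlp}, the reverse inclusion via the escape sequence $y^n = nw$ with $w$ a norming vector for $\mu$, and the exclusion from $\tau_p(\ell_p(\mathcal{N},\R))$ via unboundedness below of $h_\mu$ along that ray. Your extra remark on identifying the lemma's output with the chosen $\mu$ through uniqueness of the norming functional is exactly the point the paper leaves implicit in its phrase ``proceeding as in \Cref{ThmAlp}.''
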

\begin{proof}
If $h\in\partial_H\ell_p(\mathcal{N},\R)$, then there exists a sequence $\{y^{n}\}_{n\in\N}$ 
with $\norm{y^{n}}_p \to \infty$ such that $h$ is the pointwise limit of the sequence 
$\{\tau_{p}(y^{n})\}_{n\in\N}$. By \Cref{ThmAlp}, there exists $\mu\in\ell_q(\mathcal{N},\R)$ with $\norm{\mu}_q=1$ such that 
along subsequences $\tau_{p}(y^{n})(x)$ 
converges to $h_{\mu}(x)=-\sum_{i\in\mathcal{N}}\mu_ix_i$ for all $x\in\ell_p(\mathcal{N},\R)$. Therefore $h=h_\mu$ and so
$\partial_H \ell_p(\mathcal{N},\R)$ is contained 
in the set on the right-hand side of (\ref{lphoroset}).

On the other hand, if $\mu\in\ell_q(\mathcal{N},\R)$ with $\norm{\mu}_q=1$, then by $\ell_p/\ell_q$-duality 
there exists $w\in\ell_p(\mathcal{N},\R)$ 
with $\norm{w}_p=1$ such that $\sum_{i\in\mathcal{N}}\mu_i w_i=1$. Let $y^{n}=nw$ for all $n$. 
Then, by proceeding as in \Cref{ThmAlp}, it
follows that $\tau_{p}(y^{n})(x)=\norm{x-nw}_p-n$ converges to $h_{\mu}(x)=-\sum_{i\in\mathcal{N}}\mu_ix_i$
for all $x\in\ell_p(\mathcal{N},\R)$. That is, $h_{\mu}$ belongs to $\overline{\ell_p(\mathcal{N},\R)}^{H}$. However,
note that $h_{\mu}(y^{n})=-n$ for all $n$. Therefore, since for any $z\in\ell_p(\mathcal{N},\R)$ 
the function $\tau_{p}(z)$ is bounded from below, we must have $h_{\mu}\in\partial_H \ell_p(\mathcal{N},\R)$.
That is, every element of the set on the right-hand side of (\ref{lphoroset}) is a horofunction on $\ell_p(\mathcal{N},\R)$.  
\end{proof}
\begin{rem}
\Cref{ThmAlp} and \Cref{ThmBlp} hold for every finite-dimensional uniformly convex Banach space.
\end{rem}

\section{The horofunction boundary of $\ell_\infty(\mathcal{N},\R)$}\label{horolinfN}
It will be convenient and helpful to consider the \emph{top function} $\topf$ and 
the \emph{bottom function} $\botf$ defined on $\R^{\mathcal{N}}$ by 
\begin{equation*}
	\begin{aligned}
		\topf(x) &:= \max_{i\in\mathcal{N}}x_i,\;\;
	\end{aligned}
	\begin{aligned}
		\botf(x) &:= \min_{i\in\mathcal{N}}x_i. 
	\end{aligned}	
\end{equation*}
These functions simplify notations significantly 
when proving \Cref{ThmAlsup} and \Cref{ThmBlsup} in this section as well as 
\Cref{ThmAlvar}, \Cref{ThmBlvar}, and \Cref{horoHilbert} in \Cref{horolvarN}. 
The norm $\norm{\cdot}_\infty$ on $\R^{\mathcal{N}}$ can 
be redefined as 
\begin{equation}\label{maxnorm}
   \norm{x}_\infty=\max\{\topf(x),-\botf(x)\}.
\end{equation}

The standard cone $\R_{+}^{\mathcal{N}}$ of $\R^{\mathcal{N}}$ is defined by
\[
\R_{+}^{\mathcal{N}}:=\{x\in\R^{\mathcal{N}} \mid x_i\geq 0,\;\forall i\in\mathcal{N}\}.
\]
We denote by $\R_{>0}^{\mathcal{N}}$ the interior of $\R_{+}^{\mathcal{N}}$. The boundary $\partial\R_{+}^{\mathcal{N}}$ of 
$\R_{+}^{\mathcal{N}}$ is the set $\R_{+}^{\mathcal{N}}\setminus\R_{>0}^{\mathcal{N}}$. We shall denote
by $\cht$ the element of $\R^{\mathcal{N}}$ given by $\cht=(1,...,1)$.
It follows that, $x - \botf(x)\cht$ and $\topf(x)\cht - x$ are both elements of $\partial\R_{+}^{\mathcal{N}}$ 
for all $x\in\R^{\mathcal{N}}$.
The mapping $\Exp : \R^{\mathcal{N}}\to \R_{>0}^{\mathcal{N}}$ is defined by $\Exp(x)_i := e^{x_i}$
for all $i\in\mathcal{N}$. Similarly, the mapping $\Log : \R_{>0}^{\mathcal{N}}\to\R^{\mathcal{N}}$ is defined by
$\Log(u)_i := \log(u_i)$ for all $i\in\mathcal{N}$. 

The \textit{Hadamard product} of any two elements $x=(x_i)_{i\in\mathcal{N}}$ and 
$y=(y_i)_{i\in\mathcal{N}}$ of $\R^{\mathcal{N}}$, denoted by $x \odot y$, is another element of 
$\R^{\mathcal{N}}$ defined by $(x \odot y)_i := x_iy_i$ for all $i\in\mathcal{N}$. 
For every $x=(x_i)_{i\in\mathcal{N}}$ in $\R_{>0}^{\mathcal{N}}$ we 
shall denote by $x^{-1}$ the element of $\R_{>0}^{\mathcal{N}}$ defined by $(x^{-1})_i:=1/x_i$ for all $i\in\mathcal{N}$.

Using the notations introduced above, it readily follows that
\begin{align}
	\topf(\Exp(x)\odot\Exp(y)) &= \exp \topf(x+y) \mbox{ for all } x,y\in \R^{\mathcal{N}}, \label{t-ExpExp} \\
	\topf(\Log(x)-\Log(y)) &= \log \topf(x\odot y^{-1}) \mbox{ for all } x,y\in\R_{>0}^{\mathcal{N}}. \label{t-Log-Log}
\end{align}

Note that $\norm{x}_\infty=\max\{\topf(x),\topf(-x)\}=\topf(x,-x)$ for all $x\in\R^{\mathcal{N}}$. 
Therefore, the mapping $y\mapsto\tau_{\infty}(y)$ becomes
\begin{align*}
	\tau_{\infty}(y)(x) &=\norm{x-y}_\infty-\norm{y}_\infty \\
						&=\topf(x-y,-x+y)-\norm{y}_\infty \\
						&=\topf(x-y-\norm{y}_\infty\cht,-x+y-\norm{y}_\infty\cht).
\end{align*}
For any $x=(x_i)_{i\in\mathcal{N}}\in\R^{\mathcal{N}}$ and any nonempty subset $\mathcal{I}$ of $\mathcal{N}$ we shall
denote $x_\mathcal{I}=(x_i)_{i\in\mathcal{I}}$. 

\begin{lem}\label{ThmAlsup}
Let $\{y^{n}\}_{n\in\N}$ be a sequence in $\ell_\infty(\mathcal{N},\R)$ 
such that $\norm{y^{n}}_\infty \to \infty$, as $n\to \infty$. Then the sequence $\{\tau_{\infty}(y^{n})\}_{n\in\N}$ 
has a subsequence which converges pointwise to the function
\[
	x\mapsto h^{\mathcal{I},\mathcal{J}}_{\mu,\nu}(x):=\topf(x_\mathcal{I}-\mu,-x_\mathcal{J}-\nu),
\]
where $\emptyset\subseteq\mathcal{I},\mathcal{J} \subseteq \mathcal{N}$
with $\mathcal{I}\cap\mathcal{J}=\emptyset$, $\mathcal{I}\cup\mathcal{J}\neq\emptyset$,
and $\mu\in\R_{+}^{\mathcal{I}},\,\nu\in \R_{+}^{\mathcal{J}}$ with $\botf(\mu,\nu)=0$.
\end{lem}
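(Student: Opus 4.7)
The plan is to exploit the reformulation
\[
\tau_{\infty}(y^n)(x) = \topf\bigl(x - y^n - \norm{y^n}_\infty\cht,\; -x + y^n - \norm{y^n}_\infty\cht\bigr)
\]
derived in the excerpt, and to extract information coordinate by coordinate. Setting $a_n := \norm{y^n}_\infty$, which tends to infinity by hypothesis, the coordinate $i$ contributes two entries to the list inside the $\topf$, namely $x_i - (y_i^n + a_n)$ and $-x_i - (a_n - y_i^n)$. Since $\abs{y_i^n} \leq a_n$, the nonnegative quantities $y_i^n + a_n$ and $a_n - y_i^n$ both lie in $[0, 2a_n]$, so along subsequences each either converges to a finite nonnegative number or tends to $+\infty$. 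The idea is to partition $\mathcal{N}$ according to which case occurs; once this is done, the limit of the $\topf$ reduces to the prescribed formula by discarding the entries that escape to $-\infty$.

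Concretely, I would apply Cantor's diagonal argument over the finite set $\mathcal{N}$ to pass to a single subsequence on which, for every $i$, both $y_i^n + a_n$ and $a_n - y_i^n$ converge in $[0,+\infty]$, say to $\mu_i$ and $\nu_i$ respectively. Define
\[
\mathcal{I} := \{i\in\mathcal{N}\mid \mu_i < \infty\},\qquad \mathcal{J} := \{i\in\mathcal{N}\mid \nu_i < \infty\},
\]
and view $\mu=(\mu_i)_{i\in\mathcal{I}} \in \R_{+}^{\mathcal{I}}$, $\nu=(\nu_i)_{i\in\mathcal{J}} \in \R_{+}^{\mathcal{J}}$. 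These sets are disjoint because membership of $i$ in both would give $2a_n = (y_i^n + a_n) + (a_n - y_i^n) \to \mu_i + \nu_i < \infty$, contradicting $a_n \to \infty$. They are not both empty: because $\norm{y^n}_\infty = a_n$, we may extract a further subsequence and fix an index $i_0$ with $\abs{y_{i_0}^n} = a_n$ of constant sign, so either $y_{i_0}^n + a_n \equiv 0$ (placing $i_0 \in \mathcal{I}$ with $\mu_{i_0}=0$) or $a_n - y_{i_0}^n \equiv 0$ (placing $i_0 \in \mathcal{J}$ with $\nu_{i_0}=0$). The same observation immediately yields the normalization $\botf(\mu,\nu) = 0$.

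It remains to read off the pointwise limit. For $i\in\mathcal{I}$, one has $x_i - y_i^n - a_n \to x_i - \mu_i$, while the companion entry $-x_i + y_i^n - a_n = -x_i + (y_i^n + a_n) - 2a_n$ tends to $-\infty$. For $i\in\mathcal{J}$, the roles reverse and only $-x_i - \nu_i$ survives. For $i \notin \mathcal{I}\cup\mathcal{J}$, both entries tend to $-\infty$. Since $\mathcal{N}$ is finite and at least one entry of the list remains bounded, the divergent entries are eventually dominated by the convergent ones in the maximum, and therefore
\[
\tau_{\infty}(y^n)(x) \longrightarrow \topf\bigl((x_i - \mu_i)_{i\in\mathcal{I}},\;(-x_i - \nu_i)_{i\in\mathcal{J}}\bigr) = h^{\mathcal{I},\mathcal{J}}_{\mu,\nu}(x).
\]
No step presents a serious obstacle; the only mildly delicate point is justifying that $\topf$ commutes with the limit in the presence of entries tending to $-\infty$, which is immediate from the finiteness of $\mathcal{N}$ and the guaranteed existence of at least one convergent entry.
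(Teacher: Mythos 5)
Your proof is correct and follows essentially the same route as the paper: pass to a subsequence along which, for each coordinate, the nonnegative quantities $y_i^{n}+\norm{y^{n}}_\infty$ and $\norm{y^{n}}_\infty-y_i^{n}$ converge in $[0,\infty]$, partition $\mathcal{N}$ accordingly, and take the limit inside the finite maximum. The paper carries out the identical bookkeeping after the change of variables $u^{n}=\Exp(-y^{n}-\norm{y^{n}}_\infty\cht)$, $v^{n}=\Exp(y^{n}-\norm{y^{n}}_\infty\cht)$, using compactness of $[0,1]^{\mathcal{N}}$ in place of your direct use of the compactification $[0,\infty]$; the two arguments are the same in substance.
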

\begin{proof}
For each $n$, define 
\begin{align*}
	u^{n}&=\Exp(-y^{n}-\norm{y^{n}}_\infty\cht),\\
	v^{n}&=\Exp(y^{n}-\norm{y^{n}}_\infty\cht).
\end{align*} 
It follows that $(u^{n},v^{n})\in \R_{+}^{\mathcal{N}}\times \R_{+}^{\mathcal{N}}$ 
with $\topf(u^{n},v^{n})=1$ for all $n$. 
Therefore, there exists a subsequence $\{(u^{n_k},v^{n_k})\}_{k}$ 
which converges, as $k\to\infty$, 
to $(u,v)\in \R_{+}^{\mathcal{N}}\times \R_{+}^{\mathcal{N}}$ 
with $\topf(u,v)=1$. 
Furthermore, note that 
$u^{n_k}\odot v^{n_k}=\Exp(-2\norm{y^{n_k}}_\infty\cht)$ for all $k$. 
Hence by taking the limit as $k\to\infty$, we obtain $u\odot v =0\cht$. 
Consequently, there exist 
$\emptyset\subseteq\mathcal{I},\mathcal{J}\subseteq\mathcal{N}$ 
with $\mathcal{I}\cap\mathcal{J}=\emptyset$
and $\mathcal{I}\cup\mathcal{J}\neq\emptyset$
such that $0<u_i\leq 1$ for all $i\in\mathcal{I}$, 
$0< v_j \leq 1$ for all $j\in\mathcal{J}$ 
with $\topf(u_\mathcal{I},v_\mathcal{J})=1$. 
Now, by letting $\mu=-\Log(u_\mathcal{I})$ and $\nu=-\Log(v_\mathcal{J})$, 
it follows that $\mu\in\R_{+}^{\mathcal{I}},\nu\in \R_{+}^{\mathcal{J}}$ 
with $\botf(\mu,\nu)=0$.
Finally, let $x\in\ell_\infty(\mathcal{N},\R)$; then by (\ref{t-ExpExp}) 
we have
\begin{align*}
\lim_{k\to\infty} \tau_{\infty}(y^{n_{k}})(x) 
			&=\lim_{k\to\infty}\topf(x-y^{n_k}-\norm{y^{n_k}}_\infty\cht,-x+y^{n_k}-\norm{y^{n_k}}_\infty\cht)\\
			&=\lim_{k\to\infty}\log\topf(\Exp(x)\odot u^{n_k},\Exp(-x)\odot v^{n_k})\\
			&=\log\topf(\Exp(x)\odot u,\Exp(-x)\odot v)\\
			&=\log\topf(\Exp(x_\mathcal{I})\odot u_\mathcal{I},\Exp(-x_\mathcal{J})\odot v_\mathcal{J})\\
			&=\topf(x_\mathcal{I}-\mu,-x_\mathcal{J}-\nu).
\end{align*}
\end{proof}

Let $\overline\R$ denote the extended set of 
real numbers $\R\cup\{-\infty,\infty\}$. 
The top function $\topf$ and bottom function $\botf$ 
can be redefined on $\overline\R^{\mathcal{N}}$
according to the natural order in $\overline\R$. 
Let $\overline\R_+^{\mathcal{N}}$ denote the set 
\[
	\overline\R_+^{\mathcal{N}}=
		\{x\in\overline\R^{\mathcal{N}}\mid 0\leq x_i\leq \infty,\;\forall i=1,...,N\}
		=[0,\infty]^{\mathcal{N}}.
\] 
\begin{thm}\label{ThmBlsup}
The horofunction boundary of the metric space 
$\ell_\infty(\mathcal{N},\R)$ is given by
\begin{equation}\label{linfhoroset}
	\partial_H\ell_\infty(\mathcal{N},\R)= \left\lbrace 
						x\mapsto h_{\overline\mu,\overline\nu}(x)
							\mathrel{\bigg\vert} 
							\begin{aligned}							
									&\overline\mu,\overline\nu\in\overline\R_+^{\mathcal{N}},\;
									\botf(\overline\mu,\overline\nu)=0,\;\\
									&\overline\mu+\overline\nu=\infty\cht
							 \end{aligned} 					
						\right\rbrace,
\end{equation}
where $h_{\overline\mu,\overline\nu}(x)=\topf(x-\overline\mu,-x-\overline\nu)$
for all $x\in\ell_\infty(\mathcal{N},\R)$.
\end{thm}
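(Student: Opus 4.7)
The plan is to reduce \Cref{ThmBlsup} to \Cref{ThmAlsup} by establishing a bijection between the two parametrizations of the boundary. Given data $(\mathcal{I},\mathcal{J},\mu,\nu)$ as in \Cref{ThmAlsup}, extend $\mu$ to an element $\overline\mu\in\overline\R_+^{\mathcal{N}}$ by setting $\overline\mu_i=\mu_i$ for $i\in\mathcal{I}$ and $\overline\mu_i=\infty$ otherwise, and likewise define $\overline\nu$. The disjointness $\mathcal{I}\cap\mathcal{J}=\emptyset$ together with the finiteness of $\mu,\nu$ on their respective index sets translates exactly to $\overline\mu+\overline\nu=\infty\cht$ (each index contributes $\infty$ to at least one of the two), while $\mathcal{I}\cup\mathcal{J}\neq\emptyset$ together with $\botf(\mu,\nu)=0$ corresponds exactly to $\botf(\overline\mu,\overline\nu)=0$. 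The inverse recipe, starting from an admissible $(\overline\mu,\overline\nu)$, takes $\mathcal{I}=\{i:\overline\mu_i<\infty\}$ and $\mathcal{J}=\{i:\overline\nu_i<\infty\}$. Since entries where $\overline\mu_i=\infty$ (resp.\ $\overline\nu_i=\infty$) contribute $x_i-\infty=-\infty$ (resp.\ $-x_i-\infty=-\infty$) and are absorbed by $\topf$, one checks $h^{\mathcal{I},\mathcal{J}}_{\mu,\nu}(x)=h_{\overline\mu,\overline\nu}(x)$ pointwise.

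With this dictionary in place, the inclusion $\partial_H\ell_\infty(\mathcal{N},\R)\subseteq$ (right-hand side of \eqref{linfhoroset}) is immediate: any horofunction $h$ is a pointwise limit of $\tau_\infty(y^n)$ along some sequence with $\norm{y^n}_\infty\to\infty$, and \Cref{ThmAlsup} yields a subsequence converging to some $h^{\mathcal{I},\mathcal{J}}_{\mu,\nu}$, which equals $h_{\overline\mu,\overline\nu}$ for the corresponding admissible pair.

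For the reverse inclusion I would exhibit an explicit escape sequence. Given admissible $(\overline\mu,\overline\nu)$ and the associated $\mathcal{I},\mathcal{J}$, set
\[
	y_i^{n}=\begin{cases} \overline\mu_i-n, & i\in\mathcal{I},\\ n-\overline\nu_i, & i\in\mathcal{J},\\ 0, & i\in\mathcal{N}\setminus(\mathcal{I}\cup\mathcal{J}).\end{cases}
\]
Using $\botf(\overline\mu,\overline\nu)=0$ one sees $\norm{y^{n}}_\infty=n$ for all $n$ sufficiently large, and a coordinate-wise computation (using $\tau_{\infty}(y^{n})(x)=\max_{i}\abs{x_i-y_i^{n}}-n$) shows that each contribution tends to $x_i-\overline\mu_i$, $-x_i-\overline\nu_i$, or $-\infty$ according to whether $i$ lies in $\mathcal{I}$, $\mathcal{J}$, or neither. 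Passing the limit inside the maximum yields $\tau_{\infty}(y^{n})(x)\to h_{\overline\mu,\overline\nu}(x)$. To confirm that $h_{\overline\mu,\overline\nu}$ is genuinely in the boundary and not of the form $\tau_\infty(z)$, I would evaluate it at this same $y^{n}$: using $\mathcal{I}\cup\mathcal{J}\neq\emptyset$ one finds $h_{\overline\mu,\overline\nu}(y^{n})=-n\to-\infty$, contradicting the lower bound $-\norm{z}_\infty$ satisfied by every $\tau_\infty(z)$.

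The only real content is thus the parametrization change; the rest is bookkeeping. The step I expect to require the most care is verifying that the two sets of constraints match up exactly, in particular that $\overline\mu+\overline\nu=\infty\cht$ encodes disjointness of $\mathcal{I},\mathcal{J}$ together with the fact that only these indices carry finite data, and that $\botf(\overline\mu,\overline\nu)=0$ simultaneously captures $\mathcal{I}\cup\mathcal{J}\neq\emptyset$ and $\botf(\mu,\nu)=0$.
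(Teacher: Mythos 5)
Your proposal is correct and follows essentially the same route as the paper: the forward inclusion via \Cref{ThmAlsup} together with the dictionary $(\mathcal{I},\mathcal{J},\mu,\nu)\leftrightarrow(\overline\mu,\overline\nu)$ obtained by extending $\mu,\nu$ by $\infty$ off $\mathcal{I},\mathcal{J}$, and the reverse inclusion via the same explicit escape sequence $y^{n}$. The only deviation is in showing $h_{\overline\mu,\overline\nu}\notin\tau_{\infty}(\ell_\infty(\mathcal{N},\R))$, where you evaluate $h_{\overline\mu,\overline\nu}$ along $y^{n}$ and use that $h_{\overline\mu,\overline\nu}(y^{n})=-n\to-\infty$ contradicts the lower bound $-\norm{z}_\infty$ on $\tau_{\infty}(z)$ (the same device the paper uses in the $\ell_1$ and $\ell_p$ cases), rather than the paper's auxiliary test sequence $x^{k}$; this is a valid and, if anything, slightly more uniform way to finish, since it does not depend on whether $\mathcal{I}\cup\mathcal{J}=\mathcal{N}$.
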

\begin{proof}
Suppose that $h\in\partial_H\ell_\infty(\mathcal{N},\R)$. 
Then there exists a sequence $\{y^{n}\}_{n\in\N}$ in 
$\ell_\infty(\mathcal{N},\R)$ with $\norm{y^{n}}_\infty \to \infty$ 
such that $\tau_{\infty}(y^{n})$
converges pointwise to $h$ as $n \to \infty$. 
Let $x\in\ell_\infty(\mathcal{N},\R)$. 
By \Cref{ThmAlsup} there exist
$\emptyset\subseteq\mathcal{I},\mathcal{J}\subseteq\mathcal{N}$
with $\mathcal{I}\cap\mathcal{J}=\emptyset$, $\mathcal{I}\cup\mathcal{J}\neq\emptyset$,
and $\mu\in\R_{+}^{\mathcal{I}},\nu\in \R_{+}^{\mathcal{J}}$ 
with $\botf(\mu,\nu)=0$,
such that for some subsequence $\{y^{n_k}\}_k$ we have
\[
	h(x)=\lim_{k\to\infty}\tau_{\infty}(y^{n_k})(x)
		=\topf(x_\mathcal{I}-\mu,-x_\mathcal{J}-\nu).
\]
Finally, by letting 
\begin{equation*}
\begin{aligned}
	\overline\mu_{i}=\begin{cases} \mu_i, & i\in\mathcal{I} \\
							\infty  & i\in\mathcal{N}\setminus\mathcal{I}
				\end{cases}		
\end{aligned}
\begin{aligned}
\;,\;\;
\end{aligned}
\begin{aligned}
	\overline\nu_{i}=\begin{cases} \nu_i, & i\in\mathcal{J} \\
							\infty  & i\in\mathcal{N}\setminus\mathcal{J}
				\end{cases}		
\end{aligned}
\end{equation*}
we get $\overline\mu,\overline\nu\in\overline\R_+^{\mathcal{N}}$ 
with $\overline\mu+\overline\nu=\infty\cht$,
and $\botf(\overline\mu,\overline\nu)=0$. 
Hence, $h(x)=h_{\overline\mu,\overline\nu}(x)$ 
and so $\partial_H\ell_\infty(\mathcal{N},\R)$ is contained 
in the set on the right-hand side of (\ref{linfhoroset}).

Now, we need to show that given 
$\overline\mu,\overline\nu\in\overline\R_+^{\mathcal{N}}$ 
with $\overline\mu+\overline\nu=\infty\cht$
and $\botf(\overline\mu,\overline\nu)=0$,
the function $x\mapsto h_{\overline\mu,\overline\nu}(x)$ 
is a horofunction on $\ell_\infty(\mathcal{N},\R)$.
First we show that it belongs to $\overline{\ell_\infty(\mathcal{N},\R)}^{H}$.
Indeed, let $(y^{n})_n$ be the sequence 
in $\ell_\infty(\mathcal{N},\R)$ given by
\[
	y_i^{n}=\begin{cases} -n+\overline\mu_i, &  \overline\mu_i <\infty\\
							n-\overline\nu_i, & \overline\nu_i <\infty\\
							0,  & \mbox{otherwise}.
				\end{cases}		
\]
Let $x\in\ell_\infty(\mathcal{N},\R)$. Then
\[
	\tau_{\infty}(y^{n})(x)=\norm{x-y^{n}}_\infty-\norm{y^{n}}_\infty 
	\xrightarrow[n \to \infty]{} \topf(x-\overline\mu,-x-\overline\nu)
	= h_{\overline\mu,\overline\nu}(x). 
\]
It remains to show that $h_{\overline\mu,\overline\nu}$ is not an
element of $\tau_{\infty}(\ell_\infty(\mathcal{N},\R))$.
Suppose the contrary, so there exists $z\in\ell_\infty(\mathcal{N},\R)$ 
such that
\[
	h_{\overline\mu,\overline\nu}(x)=\topf(x-\overline\mu,-x-\overline\nu)
	=\tau_{\infty}(z)(x).
\]
For each $k$, define
\[
	x_i^{k}=\begin{cases} \overline\mu_i, & \overline\mu_i <\infty \\
							-\overline\nu_i, & \overline\nu_i <\infty\\
							-k,  & \mbox{otherwise}.
				\end{cases}			
\]
Then $h_{\overline\mu,\overline\nu}(x^{k})=\topf(x^{k}-\overline\mu,-x^{k}-\overline\nu)=0$
for all $k$. However,
\[
	\tau_{\infty}(z)(x^{k})=\norm{x^{k}-z}_\infty-\norm{z}_\infty 
	\centernot{\xrightarrow[k \to \infty]{}0},
\]
which is a contradiction. 
Therefore $h_{\overline\mu,\overline\nu}\in\partial_H\ell_\infty(\mathcal{N},\R)$
and so the other inclusion holds.
\end{proof}

\section{The horofunction boundary of $\ell_{\var}(\mathcal{N},\R)$}\label{horolvarN}
We define the \textit{variation norm} 
on $\R^{\mathcal{N}}$ by
\begin{equation}\label{varnorm}
	\norm{x}_\var := \topf(x)-\botf(x),
\end{equation}
where $\topf$ and $\botf$ are, respectively, 
the top and bottom functions introduced in \Cref{horolinfN}.
In fact, $\norm{\cdot}_\var$ is a pseudo-norm 
on $\R^{\mathcal{N}}$, as $\norm{x}_\var = 0$ 
if and only if $x=\lambda\cht$ for some $\lambda\in\R$.
Moreover $\norm{x+\lambda\cht}_\var = \norm{x}_\var$ 
for all $x\in\R^{\mathcal{N}}$.
Hence $\norm{\cdot}_\var$ is a norm on the quotient 
vector space $\R^{\mathcal{N}}/\R\cht$. 
By (\ref{varnorm}), the mapping $y\mapsto\tau_\var(y)$ 
becomes
\begin{align}
		\tau_\var(y)(x) &= \norm{x-y}_\var - \norm{y}_\var \nonumber \\
						  &= \topf(x-y)-\botf(x-y) - \topf(y)+\botf(y) \nonumber \\
						  &= \topf(x-y+\botf(y)\cht) - \botf(x-y+\topf(y)\cht) \label{emblvar}. 
\end{align}
\begin{lem}\label{ThmAlvar}
Let $N\geq 2$ and $\mathcal{N}=\{1,...,N\}$. 
Let $\{y^{n}\}_{n\in\N}$ be a sequence in $\ell_{\var}(\mathcal{N},\R)$ 
such that $\norm{y^{n}}_\var \to \infty$, as $n\to \infty$. 
Then $\{\tau_\var(y^{n})\}_{n\in\N}$ has a 
subsequence which converges pointwise to the function
\[
	x\mapsto h_{\mu,\nu}^{\mathcal{I},\mathcal{J}}(x)
	:=\topf(x_\mathcal{I}-\mu) - \botf(x_\mathcal{J}+\nu),
\]
where $\emptyset\subsetneq\mathcal{I},\mathcal{J}\subsetneq\mathcal{N}$
with $\mathcal{I}\cap\mathcal{J}=\emptyset$, and
$\mu\in\partial\R_{+}^{\mathcal{I}}$, $\nu\in \partial\R_{+}^{\mathcal{J}}$.
\end{lem}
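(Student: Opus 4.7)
The plan is to adapt the argument of \Cref{ThmAlsup} to the variation pseudo-norm, exploiting the kernel $\R\cht$ of $\norm{\cdot}_\var$ to normalize the escape sequence, then using the $\Exp/\Log$ trick to encode both ``directions of escape'' by a compact pair of vectors.

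Because both $\norm{y^n}_\var$ and $\tau_\var(y^n)$ are invariant under $y^n \mapsto y^n + \lambda\cht$, I would first replace each $y^n$ by $y^n - \botf(y^n)\cht$, so that $\botf(y^n) = 0$ and $M_n := \topf(y^n) = \norm{y^n}_\var \to \infty$. Formula (\ref{emblvar}) then collapses to
\[
\tau_\var(y^n)(x) = \topf(x-y^n) + \topf(y^n - M_n\cht - x),
\]
after using $-\botf(a) = \topf(-a)$.

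Following the proof of \Cref{ThmAlsup}, I would define $u^n := \Exp(-y^n)$ and $v^n := \Exp(y^n - M_n\cht)$. Then $u^n, v^n \in [0,1]^\mathcal{N}$, $\topf(u^n) = \topf(v^n) = 1$, and $u^n \odot v^n = \Exp(-M_n\cht) \to 0$. By compactness, along a subsequence $(u^{n_k}, v^{n_k}) \to (u,v)$ with $\topf(u) = \topf(v) = 1$ and $u\odot v = 0$. Set $\mathcal{I} := \{i : u_i > 0\}$ and $\mathcal{J} := \{j : v_j > 0\}$; they are disjoint (from $u\odot v = 0$) and each nonempty (since $\topf(u) = \topf(v) = 1$), whence, using $N\geq 2$, each is a proper subset of $\mathcal{N}$. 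Put $\mu := -\Log(u_\mathcal{I})$ and $\nu := -\Log(v_\mathcal{J})$; then $\mu \in \R_+^\mathcal{I}$ with $\botf(\mu) = -\log\topf(u_\mathcal{I}) = 0$, so $\mu \in \partial\R_+^\mathcal{I}$, and analogously $\nu \in \partial\R_+^\mathcal{J}$.

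For the passage to the limit I would apply the identity (\ref{t-ExpExp}) to each summand above:
\[
\tau_\var(y^{n_k})(x) = \log\topf(\Exp(x)\odot u^{n_k}) + \log\topf(\Exp(-x)\odot v^{n_k}),
\]
and since $u, v \neq 0$ and $\Exp(\pm x)$ is strictly positive, continuity of $\log\topf$ together with the vanishing of coordinates outside $\mathcal{I}, \mathcal{J}$ yields
\[
\tau_\var(y^{n_k})(x) \;\longrightarrow\; \log\topf(\Exp(x_\mathcal{I})\odot u_\mathcal{I}) + \log\topf(\Exp(-x_\mathcal{J})\odot v_\mathcal{J}) = \topf(x_\mathcal{I}-\mu) - \botf(x_\mathcal{J}+\nu),
\]
which is precisely $h_{\mu,\nu}^{\mathcal{I},\mathcal{J}}(x)$. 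The main obstacle is conceptual rather than computational: one must identify the right normalization ($\botf(y^n) = 0$) and encode the bounded/unbounded dichotomy of the coordinates of $y^n$ by the compact data $(u,v)$. The properness of $\mathcal{I}$ and $\mathcal{J}$ (as opposed to just nonemptiness, which was enough in \Cref{ThmAlsup}) comes for free from disjointness, but this is the only new feature beyond the $\ell_\infty$ argument.
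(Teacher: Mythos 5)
Your proposal is correct and follows essentially the same route as the paper: the same $\Exp$-encoding $u^{n}=\Exp(\botf(y^{n})\cht-y^{n})$, $v^{n}=\Exp(y^{n}-\topf(y^{n})\cht)$ (your preliminary normalization $\botf(y^{n})=0$ only simplifies these formulas), the same compactness and $u\odot v=0$ argument, and the same passage to the limit via (\ref{t-ExpExp}). The only cosmetic difference is that the paper extracts $\mathcal{I},\mathcal{J}$ after writing the limit in terms of $(u,v)$, whereas you do it before; nothing of substance changes.
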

\begin{proof}
For each $n$, define
\begin{align*}
	u^{n} &=\Exp(\botf(y^{n})\cht - y^{n}),\\
	v^{n} &=\Exp(y^{n}-\topf(y^{n})\cht).
\end{align*}
It follows that $(u^{n},v^{n})\in \R_{+}^{\mathcal{N}}\times\R_{+}^{\mathcal{N}}$ 
with $\topf(u^{n})=1$, $\topf(v^{n})=1$ for all $n$. 
Hence, there exists a subsequence
$\{(u^{n_k},v^{n_k})\}_k$ which converges, as $k\to\infty$, 
to some $(u,v)\in \R_{+}^{\mathcal{N}}\times\R_{+}^{\mathcal{N}}$ 
with $\topf(u)=1$, $\topf(v)=1$. 
Let $x\in\ell_{\var}(\mathcal{N},\R)$. 
By (\ref{t-ExpExp}), it follows that for every $k$,
\begin{enumerate}[\upshape (i)]
\item $\log\topf(\Exp(x)\odot u^{n_{k}}) = \topf(x-y^{n_{k}}+\botf(y^{n_{k}})\cht)$,
\item $\log\topf(\Exp(-x)\odot v^{n_{k}})=\topf(-x+y^{n_{k}}-\topf(y^{n_{k}})\cht)=-\botf(x-y^{n_{k}}+\topf(y^{n_{k}})\cht)$.
\end{enumerate}
Therefore, by (\ref{emblvar})
\begin{align*}
	\tau_\var(y^{n_{k}})(x) &=\topf(x-y^{n_{k}}+\botf(y^{n_{k}})\cht) - \botf(x-y^{n_{k}}+\topf(y^{n_{k}})\cht)\\
						  &=\log\topf(\Exp(x)\odot u^{n_{k}}) + \log\topf(\Exp(-x)\odot v^{n_{k}}) \\
						  &\xrightarrow[k \to \infty]{} \log\topf(\Exp(x)\odot u) + \log\topf(\Exp(-x)\odot v).
\end{align*}
Also note that 
$u^{n_{k}} \odot v^{n_{k}} = \exp(-\norm{y^{n_{k}}}_\var)\cht$
for all $k$. Hence, by taking the limit as $k\to\infty$ 
we obtain $u \odot v =0\cht$. Consequently, there exist 
$\emptyset\subsetneq\mathcal{I},\mathcal{J}\subsetneq\mathcal{N}$ 
with $\mathcal{I}\cap\mathcal{J}=\emptyset$  
such that $0<u_i\leq 1$ for all $i\in \mathcal{I}$, 
and $0<v_j\leq 1$ for all $j\in \mathcal{J}$.
Let $\mu=-\Log(u_\mathcal{I})$ and $\nu=-\Log(v_\mathcal{J})$. 
Then $\mu\in\partial\R_{+}^{\mathcal{I}}$ 
and $\nu\in \partial\R_{+}^{\mathcal{J}}$. 
Therefore, by (\ref{t-ExpExp}), it follows that
\begin{align*}
\lim_{k\to\infty} \tau_\var(y^{n_{k}})(x) &= \log\topf(\Exp(x)\odot u) + \log\topf(\Exp(-x)\odot v)\\
							&=\log\topf(\Exp(x_\mathcal{I})\odot u_\mathcal{I}) + \log\topf(\Exp(-x_\mathcal{J})\odot v_\mathcal{J})\\
							&=\topf(x_\mathcal{I}-\mu)+\topf(-x_\mathcal{J}-\nu)\\
							&=\topf(x_\mathcal{I}-\mu)-\botf(x_\mathcal{J}+\nu)\\
							&=h_{\mu,\nu}^{\mathcal{I},\mathcal{J}}(x).
\end{align*}
\end{proof}
\begin{thm}\label{ThmBlvar}
Let $N\geq 2$ and $\mathcal{N}=\{1,...,N\}$. 
The horofunction boundary of the pseudo-normed 
space $\ell_{\var}(\mathcal{N},\R)$ is given by 
\begin{equation}\label{lvarhoroset}
	\partial_H\ell_{\var}(\mathcal{N},\R)= \left\lbrace 
						x\mapsto h_{\mu,\nu}^{\mathcal{I},\mathcal{J}}(x)
							\mathrel{\bigg\vert} 
							\begin{aligned}							
								&\emptyset\subsetneq\mathcal{I},\mathcal{J}\subsetneq\mathcal{N},\;
								\mathcal{I}\cap\mathcal{J}=\emptyset,\\
							 	&\mu\in\partial\R_{+}^{\mathcal{I}},\;\nu\in \partial\R_{+}^{\mathcal{J}}
							 \end{aligned} 	
						\right\rbrace,
\end{equation}
where $h_{\mu,\nu}^{\mathcal{I},\mathcal{J}}(x)=\topf(x_\mathcal{I}-\mu) - \botf(x_\mathcal{J}+\nu)$
for all $x\in \ell_{\var}(\mathcal{N},\R)$.
\end{thm}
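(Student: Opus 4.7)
The plan is to follow the same two-inclusion strategy used in \Cref{ThmBl1}, \Cref{ThmBlp}, and \Cref{ThmBlsup}, with \Cref{ThmAlvar} supplying the compactness half.

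For the inclusion ``$\subseteq$'', given $h\in\partial_H\ell_\var(\mathcal{N},\R)$, pick a sequence $\{y^n\}_{n\in\N}$ with $\tau_\var(y^n)\to h$ pointwise. I would first argue that, after passing to a subsequence, $\norm{y^n}_\var\to\infty$: otherwise, since $\norm{\cdot}_\var$ is a genuine norm on the finite-dimensional quotient $\R^{\mathcal{N}}/\R\cht$, a bounded subsequence would converge in the quotient to some $y$, giving $\tau_\var(y^n)\to\tau_\var(y)\in\tau_\var(\ell_\var(\mathcal{N},\R))$, which contradicts $h\in\partial_H\ell_\var(\mathcal{N},\R)$. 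Then \Cref{ThmAlvar} furnishes admissible data $\mathcal{I},\mathcal{J},\mu,\nu$ and a further subsequence along which $\tau_\var(y^{n_k})$ converges pointwise to $h_{\mu,\nu}^{\mathcal{I},\mathcal{J}}$, forcing $h=h_{\mu,\nu}^{\mathcal{I},\mathcal{J}}$.

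For the inclusion ``$\supseteq$'', given admissible $\mathcal{I},\mathcal{J},\mu,\nu$, I would exhibit the explicit escape sequence
\[
    y_i^{n}=\begin{cases} \mu_i-n, & i\in\mathcal{I},\\ -\nu_i+n, & i\in\mathcal{J},\\ 0, & i\in\mathcal{N}\setminus(\mathcal{I}\cup\mathcal{J}), \end{cases}
\]
and compute. Using $\botf(\mu)=0$ and $\botf(\nu)=0$, for $n$ large one has $\botf(y^n)=-n$ and $\topf(y^n)=n$, so $\norm{y^n}_\var=2n\to\infty$. Substituting into (\ref{emblvar}), the coordinates of $x-y^n-n\cht$ equal $x_i-\mu_i$ on $\mathcal{I}$, $x_j+\nu_j-2n$ on $\mathcal{J}$, and $x_k-n$ elsewhere, so for $n$ large the top function is attained on $\mathcal{I}$ and equals $\topf(x_\mathcal{I}-\mu)$; symmetrically, $\botf(x-y^n+n\cht)$ is attained on $\mathcal{J}$ and equals $\botf(x_\mathcal{J}+\nu)$. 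Hence $\tau_\var(y^n)(x)\to h_{\mu,\nu}^{\mathcal{I},\mathcal{J}}(x)$ for each $x$, showing $h_{\mu,\nu}^{\mathcal{I},\mathcal{J}}\in\overline{\ell_\var(\mathcal{N},\R)}^{H}$.

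To finish I need $h_{\mu,\nu}^{\mathcal{I},\mathcal{J}}\notin\tau_\var(\ell_\var(\mathcal{N},\R))$, imitating the closing step of \Cref{ThmBl1}. Evaluating at the same sequence gives $h_{\mu,\nu}^{\mathcal{I},\mathcal{J}}(y^n)=\topf(y^n_\mathcal{I}-\mu)-\botf(y^n_\mathcal{J}+\nu)=-2n\to-\infty$, whereas for any $z\in\ell_\var(\mathcal{N},\R)$ the triangle inequality for the pseudo-norm gives $\tau_\var(z)(y^n)\geq -\norm{z}_\var$, a contradiction. The only mildly delicate point I foresee is making the ``eventually attained'' argument for $\topf$ and $\botf$ rigorous enough to conclude pointwise convergence; beyond that, everything reduces to the same $\topf/\botf$ bookkeeping already employed in \Cref{ThmBlsup}.
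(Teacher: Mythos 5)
Your proposal is correct and follows essentially the same route as the paper: the first inclusion is obtained from \Cref{ThmAlvar} (you additionally justify why $\norm{y^n}_\var\to\infty$, which the paper merely asserts), and the second inclusion uses the identical escape sequence $y^n$ with the same $\topf/\botf$ bookkeeping. The only divergence is the final step showing $h_{\mu,\nu}^{\mathcal{I},\mathcal{J}}\notin\tau_\var(\ell_\var(\mathcal{N},\R))$: you evaluate $h_{\mu,\nu}^{\mathcal{I},\mathcal{J}}$ along $y^n$ itself and play $h_{\mu,\nu}^{\mathcal{I},\mathcal{J}}(y^n)=-2n\to-\infty$ against the lower bound $\tau_\var(z)\geq-\norm{z}_\var$ (mirroring \Cref{ThmBl1}), whereas the paper instead builds a separate test sequence $x^k$ on which $h_{\mu,\nu}^{\mathcal{I},\mathcal{J}}$ vanishes; your variant is equally valid and sidesteps the case $\mathcal{I}\cup\mathcal{J}=\mathcal{N}$, where the paper's sequence $x^k$ is constant in $k$.
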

\begin{proof}
Suppose that $h\in\partial_H\ell_{\var}(\mathcal{N},\R)$. 
Then there exists a sequence $\{y^{n}\}_{n\in\N}$ 
in $\ell_{\var}(\mathcal{N},\R)$ with $\norm{y^{n}}_\var \to \infty$ 
such that $\tau_\var(y^{n})$
converges pointwise to $h$ as $n\to\infty$.
By \Cref{ThmAlvar}, there exist 
$\emptyset\subsetneq\mathcal{I},\mathcal{J}\subsetneq\mathcal{N}$
with $\mathcal{I}\cap\mathcal{J}=\emptyset$, and 
$\mu\in \partial\R_{+}^{\mathcal{I}},\nu\in \partial\R_{+}^{\mathcal{J}}$ 
such that there is a subsequence $\tau_\var(y^{n_k})$ 
that converges pointwise to $h_{\mu,\nu}^{\mathcal{I},\mathcal{J}}$ as 
$k \to \infty $. Therefore $h=h_{\mu,\nu}^{\mathcal{I},\mathcal{J}}$ 
and so $\partial_H\ell_{\var}(\mathcal{N},\R)$
is contained in the set on the right-hand side of (\ref{lvarhoroset}).

Now, we need to show that for given 
$\emptyset\subsetneq\mathcal{I},\mathcal{J}\subsetneq\mathcal{N}$
with $\mathcal{I}\cap\mathcal{J}=\emptyset$, and 
$\mu\in \partial\R_{+}^{\mathcal{I}},\nu\in \partial\R_{+}^{\mathcal{J}}$, 
the function $h_{\mu,\nu}^{\mathcal{I},\mathcal{J}}$ 
is a horofunction on $\ell_{\var}(\mathcal{N},\R)$.
First we show that $h_{\mu,\nu}^{\mathcal{I},\mathcal{J}}$ 
belongs to $\overline{\ell_{\var}(\mathcal{N},\R)}^{H}$.
Let $(y^{n})_n$ be the sequence in $\ell_{\var}(\mathcal{N},\R)$ 
given by
\[
	y_{i}^{n}=\begin{cases} -n+\mu_i, & i\in\mathcal{I}, \\
							n-\nu_i, & i\in\mathcal{J},\\
							0,  & \mbox{ otherwise}.
				\end{cases}		
\]
Let $x\in\ell_{\var}(\mathcal{N},\R)$. 
Then, by (\ref{emblvar}) we have
\[
	\tau_\var(y^{n})(x)=\norm{x-y^{n}}_\var-\norm{y^{n}}_\var 
	\xrightarrow[n \to \infty]{} 
	\topf(x_\mathcal{I}-\mu) - \botf(x_\mathcal{J}+\nu)
	= h_{\mu,\nu}^{\mathcal{I},\mathcal{J}}(x).
\]
Hence $h_{\mu,\nu}^{\mathcal{I},\mathcal{J}}$ 
is an element of $\overline{\ell_{\var}(\mathcal{N},\R)}^{H}$.
It remains to show that $h_{\mu,\nu}^{\mathcal{I},\mathcal{J}}$ 
is not in $\tau_\var(\ell_{\var}(\mathcal{N},\R))$.
Suppose the contrary, so there exists $z\in\ell_{\var}(\mathcal{N},\R)$ 
such that $h_{\mu,\nu}^{\mathcal{I},\mathcal{J}}=\tau_\var(z)$.
For each $k$, define
\[
	x_{i}^{k}=\begin{cases} \mu_i, & i\in\mathcal{I}, \\
							-\nu_i, & i\in\mathcal{J},\\
							-k,  & \mbox{ otherwise}.
				\end{cases}			
\]
Then $h_{\mu,\nu}^{\mathcal{I},\mathcal{J}}(x^{k})=0$
for all $k$. However,
\[
	\tau_\var(z)(x^{k})=\norm{x^{k}-z}_\var-\norm{z}_\var 
	\centernot{\xrightarrow[k \to \infty]{}0},
\]
which is a contradiction. 
Therefore $h_{\mu,\nu}^{\mathcal{I},\mathcal{J}}$ 
belongs to $\partial_H\ell_{\var}(\mathcal{N},\R)$.
\end{proof}

\subsection{Hilbert's projective metric on $\R_{>0}^{\mathcal{N}}$}
We define \textit{Hilbert's projective metric} 
on $\R_{>0}^{\mathcal{N}}$ by
\[
	\dH(x,y):=\log\frac{\topf(x\odot y^{-1})}{\botf(x\odot y^{-1})}
\]
for all $x,y$ in $\R_{>0}^{\mathcal{N}}$ 
(see \cite{Birkhoff1957,Bushell1973_2,Bushell1973_1}). 
In fact, $\dH(\cdot,\cdot)$ is a pseudo-metric 
on $\R_{>0}^{\mathcal{N}}$. 
More precisely, $\dH(x,y)=0$ if and only if $x=\beta y$
for some $\beta > 0$. Also, 
$\dH(\alpha x,\beta y)=\dH(x,y)$ for all $\alpha,\beta>0$ 
and all $x,y\in\R_{>0}^{\mathcal{N}}$.
By (\ref{t-Log-Log}) and (\ref{varnorm}), it follows that 
\begin{align}
	\dH(x,y)&=\log \topf(x\odot y^{-1}) - \log \botf(x\odot y^{-1}) \nonumber\\
			&=\topf(\Log x - \Log y) - \botf(\Log x - \Log y) \nonumber\\
			&=\norm{\Log x - \Log y}_\var. \label{LOGiso}
\end{align}
In other words, the mapping $\Log$ is an isometry 
of $(\R_{>0}^{\mathcal{N}},\dH)$ into $\ell_{\var}^{\mathcal{N}}$. 
See \cite{Nussbaum1988,Lemmens_Nussbaum2012} for more details. 
The horofunction boundary of Hilbert's projective 
metric space $(\R_{>0}^{\mathcal{N}},\dH)$
is completely described by combining (\ref{LOGiso}) and \Cref{ThmBlvar} 
as follows.
\begin{cor}\label{horoHilbert}
The horoboundary for Hilbert's projective metric $\dH$ on $\R_{>0}^{\mathcal{N}}$ 
is given by 
\[
	\partial_H (\R_{>0}^{\mathcal{N}},\dH)= \left\lbrace 
							x\mapsto h_{u,v}(x)
							\mathrel{\bigg\vert} 
							\begin{aligned}
								&u,v\in \R_{+}^{\mathcal{N}},\;
								\topf(u)=1, \;\topf(v)=1,\\
								&u\odot v=0\cht
							\end{aligned}							
						\right\rbrace.
\]
where $h_{u,v}(x):=\log\topf(u \odot x)+\log\topf(v\odot x^{-1})$
for all $x\in\R_{>0}^{\mathcal{N}}$.
\end{cor}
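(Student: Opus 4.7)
The plan is to transport the description of Theorem \ref{ThmBlvar} across the logarithmic isometry of (\ref{LOGiso}), and then repackage the parameters.

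First I would fix the base point $\cht \in \R_{>0}^{\mathcal{N}}$, so $\Log \cht = 0$. By (\ref{LOGiso}),
\[
\tau_{\dH}(y)(x) = \dH(x,y) - \dH(\cht,y) = \norm{\Log x - \Log y}_\var - \norm{\Log y}_\var = \tau_\var(\Log y)(\Log x)
\]
for all $x, y \in \R_{>0}^{\mathcal{N}}$. Since $\Log$ is a homeomorphism from $\R_{>0}^{\mathcal{N}}$ onto $\R^{\mathcal{N}}$ and $\dH(\cht, y^{n}) = \norm{\Log y^{n}}_\var \to \infty$ is equivalent to $\norm{\Log y^{n}}_\var \to \infty$, a function $h$ is a horofunction on $(\R_{>0}^{\mathcal{N}}, \dH)$ if and only if $x \mapsto h(\Exp(x))$ is a horofunction on $\ell_\var(\mathcal{N},\R)$. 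By \Cref{ThmBlvar}, this means exactly that $h(x) = h_{\mu,\nu}^{\mathcal{I},\mathcal{J}}(\Log x)$ for some admissible quadruple $(\mathcal{I}, \mathcal{J}, \mu, \nu)$.

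Next I would exhibit the bijection between the parameter sets. Given $(\mathcal{I}, \mathcal{J}, \mu, \nu)$ as in \Cref{ThmBlvar}, define $u, v \in \R_{+}^{\mathcal{N}}$ by
\[
u_i = \begin{cases} e^{-\mu_i}, & i\in \mathcal{I}, \\ 0, & \mbox{otherwise}, \end{cases}
\qquad
v_i = \begin{cases} e^{-\nu_i}, & i\in \mathcal{J}, \\ 0, & \mbox{otherwise}. \end{cases}
\]
Because $\mu \in \partial \R_{+}^{\mathcal{I}}$ has a zero coordinate, $\topf(u) = 1$; similarly $\topf(v) = 1$; and $\mathcal{I} \cap \mathcal{J} = \emptyset$ forces $u \odot v = 0\cht$. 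Conversely, any $(u,v)$ satisfying these three conditions comes from a unique such quadruple by taking $\mathcal{I} = \operatorname{supp}(u)$, $\mathcal{J} = \operatorname{supp}(v)$ (both proper in $\mathcal{N}$ because $u \odot v = 0\cht$ and each of $u, v$ is nonzero), $\mu = -\Log(u_\mathcal{I})$, $\nu = -\Log(v_\mathcal{J})$. A short computation,
\[
\log \topf(u \odot x) = \max_{i\in\mathcal{I}}(\log x_i - \mu_i) = \topf((\Log x)_\mathcal{I} - \mu),
\]
and the analogous identity $\log \topf(v \odot x^{-1}) = \topf(-(\Log x)_\mathcal{J} - \nu) = -\botf((\Log x)_\mathcal{J} + \nu)$, then gives
\[
h_{u,v}(x) = \topf((\Log x)_\mathcal{I} - \mu) - \botf((\Log x)_\mathcal{J} + \nu) = h_{\mu,\nu}^{\mathcal{I},\mathcal{J}}(\Log x),
\]
completing the identification.

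No single step looks like a serious obstacle: the isometry does all the heavy lifting by pushing \Cref{ThmBlvar} through pointwise-convergence-preserving $\Log$. The only points requiring care are (i) verifying that the support/positivity conditions $\topf(u)=\topf(v)=1$ and $u \odot v = 0\cht$ correspond precisely to the hypotheses $\mathcal{I}, \mathcal{J} \subsetneq \mathcal{N}$ nonempty and disjoint with $\mu \in \partial\R_{+}^{\mathcal{I}}$, $\nu \in \partial\R_{+}^{\mathcal{J}}$, and (ii) checking that the two representations of the horofunction agree coordinate by coordinate under the exchange $u_i \leftrightarrow e^{-\mu_i}$ (the $-\botf = \topf(-\cdot)$ flip is what converts the $v \odot x^{-1}$ term into the second half of $h_{\mu,\nu}^{\mathcal{I},\mathcal{J}}$).
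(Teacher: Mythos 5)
Your proposal is correct and follows essentially the same route as the paper: push Theorem \ref{ThmBlvar} through the isometry $\Log$ of (\ref{LOGiso}) and re-parametrize $(\mathcal{I},\mathcal{J},\mu,\nu)$ as $(u,v)$ via $u_i=e^{-\mu_i}$ on $\mathcal{I}$ (zero elsewhere) and likewise for $v$. You are in fact slightly more complete than the paper in explicitly verifying the converse direction of the parameter correspondence (recovering $\mathcal{I},\mathcal{J},\mu,\nu$ from an admissible pair $(u,v)$), which is needed for the reverse inclusion.
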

\begin{proof}
Let $x\in\R_{>0}^{\mathcal{N}}$. For every $y^{n}\in\R_{>0}^{\mathcal{N}}$ 
we have
\begin{align*}
	\tau_{\dH}(y^{n})(x)&=\dH(x,y^{n})-\dH(\cht,y^{n}) \\
					&=\norm{\Log(x)-\Log(y^{n})}_\var - \norm{\Log(\cht)-\Log(y^{n})}_\var\\
					&=\tau_\var(\Log (y^{n}))(\Log(x)).
\end{align*}
Note that $\dH(\cht,y^{n})\to \infty$ as $n\to \infty$ 
if and only if $y^{n}\to\xi\in\partial\R_+^{\mathcal{N}}$ 
as $n\to \infty$.
The latter can be expressed equivalently by 
$\norm{\Log y^{n}}_\var\to \infty$ as $n\to \infty$.
By \Cref{ThmBlvar}, it follows that 
$h_\xi\in\partial_H (\R_{>0}^{\mathcal{N}},\dH)$ is
given by 
\[
	h_\xi(x)=h_{\mu,\nu}^{\mathcal{I},\mathcal{J}} (\Log(x))
	=\topf(\Log(x_\mathcal{I})-\mu)+\topf(-\Log(x_\mathcal{J})-\nu),
\]
where $\emptyset\subsetneq\mathcal{I},\mathcal{J}\subsetneq\mathcal{N}$ 
with $\mathcal{I}\cap\mathcal{J}=\emptyset$, and
$\mu\in\partial\R_{+}^{\mathcal{I}},\;\nu\in \partial\R_{+}^{\mathcal{J}}$.
Finally, consider $u=(u_i)_{i\in\mathcal{N}}$ and $v=(v_i)_{i\in\mathcal{N}}$ 
given by
\begin{equation*}
\begin{aligned}
	u_{i}=\begin{cases} \exp(-\mu_i), & i\in\mathcal{I}, \\
							0,  & i\in\mathcal{N}\setminus\mathcal{I},
				\end{cases}		
\end{aligned}
\begin{aligned}
\;,\;\;
\end{aligned}
\begin{aligned}
	v_{j}=\begin{cases} \exp(-\nu_j), & j\in\mathcal{J}, \\
							0,  & j\in\mathcal{N}\setminus\mathcal{J}.
				\end{cases}		
\end{aligned}
\end{equation*}
Then $u,v\in \R_{+}^{\mathcal{N}}$ 
with $\topf(u)=1$, $\topf(v)=1$ and $u\odot v=0\cht$. 
Hence, by (\ref{t-Log-Log}) it follows that 
\[
	h_\xi(x)=\log\topf(u \odot x)+\log\topf(v\odot x^{-1})=h_{u,v}(x).
\]
\end{proof}

\subsection{Perron's Theorem}
Let $N\geq 2$ and $\mathcal{N}=\{1,...,N\}$. 
Let $T=(T_{ij})\in\R^{\mathcal{N}\times \mathcal{N}}$ 
be a positive matrix, that is $T_{ij}>0$ for all $i,j\in\mathcal{N}$. 
Perron's theorem states that $T$ fixes a unique point 
in $\R_{>0}^{\mathcal{N}}/\R_{>0}$. We give here a new proof 
by applying the horofunction boundary of Hilbert's projective 
metric on $\R_{>0}^{\mathcal{N}}$.

Let $x\in\R_{>0}^{\mathcal{N}}$. 
For each $i,j\in\mathcal{N}$ we have
\begin{align*}
	(Tx)_i &:=\sum_{k\in\mathcal{N}}T_{ik}x_k \leq \topf((T_{ik})_{k\in\mathcal{N}})\sum_{k\in\mathcal{N}}x_k, \\
	(Tx)_j &:=\sum_{k\in\mathcal{N}}T_{jk}x_k \geq \botf((T_{jk})_{k\in\mathcal{N}})\sum_{k\in\mathcal{N}}x_k. 
\end{align*}
By \Cref{horoHilbert}, each element of the 
horofunction boundary $\partial_H (\R_{>0}^{\mathcal{N}},\dH)$ 
is of the form $h_{u,v}(x)=\log\topf(u \odot x)+\log\topf(v\odot x^{-1})$,
where $u,v\in \R_{+}^{\mathcal{N}}$ 
with $\topf(u)=1$, $\topf(v)=1$ and $u\odot v=0\cht$. 
Thus, 
\begin{equation}\label{horo}
	h_{u,v}(Tx) \leq \log 
	\max_{i,j}\bigg\{ u_i v_j \dfrac{\topf((T_{ik})_{k\in\mathcal{N}})}{\botf(T_{jk})_{k\in\mathcal{N}})}\bigg\}. 
\end{equation}
Let $r_{u,v}$ denote the term on the right-hand side of (\ref{horo}). 
Therefore, for every $x\in\R_{>0}^{\mathcal{N}}$, 
the sequence $\{Tx,T^{2}x,T^{3}x,...\}$ stays within 
the horoball $\mathcal{H}(h_{u,v},r_{u,v})$. 
It is well-known \cite{Karlsson_Metz_Noskov2006,Walsh2008} 
that horoballs for Hilbert's projective metric $\dH$ are convex subsets. 
It is also well-known \cite{Nussbaum1988,Lemmens_Nussbaum2012} 
that the norm topology and $\dH$-topology are equivalent 
in $\R_{>0}^{\mathcal{N}}/\R_{>0}$. 
By combining these facts and (\ref{horo}), 
we readily obtain the following.
\begin{lem}\label{compactcone}
The set
\[
	 C=\bigcap_{\substack{u,v\in\R_{+}^{\mathcal{N}} \\ 
	 \topf(u)=\topf(v)=1 \\ u\odot v=0\cht}} \mathcal{H}(h_{u,v},r_{u,v})
\]
is a nonempty convex subset of $\R_{>0}^{\mathcal{N}}$. 
Furthermore, $C$ is compact in $\R_{>0}^{\mathcal{N}}/\R_{>0}$ and 
is invariant under the positive matrix $T$, that is, $TC\subset C$.
\end{lem}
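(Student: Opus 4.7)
The plan is to treat the three claims—that $C$ is a nonempty convex subset of $\R_{>0}^{\mathcal{N}}$, that $TC \subseteq C$, and that $C$ is compact in $\R_{>0}^{\mathcal{N}}/\R_{>0}$—separately, with inequality (\ref{horo}) as the main tool. The first two claims will be essentially by-products of (\ref{horo}) and the already cited convexity of horoballs; the main obstacle will be compactness in the quotient.

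Inequality (\ref{horo}) asserts that $h_{u,v}(Tx) \leq r_{u,v}$ for every $x \in \R_{>0}^{\mathcal{N}}$ and every admissible pair $(u,v)$, which gives the inclusion $T(\R_{>0}^{\mathcal{N}}) \subseteq C$. In particular $C$ is nonempty, and since $C \subseteq \R_{>0}^{\mathcal{N}}$ by definition, one obtains $T(C) \subseteq T(\R_{>0}^{\mathcal{N}}) \subseteq C$. Convexity of $C$ is immediate from the cited convexity of each horoball $\mathcal{H}(h_{u,v},r_{u,v})$, since an intersection of convex sets is convex.

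For compactness, I would first observe that $C$ is scaling-invariant: indeed $h_{u,v}(\alpha x) = h_{u,v}(x)$ for all $\alpha > 0$, the two $\log$-terms contributing $+\log\alpha$ and $-\log\alpha$. So $C$ descends to a well-defined subset of the quotient, closed there by continuity of each $h_{u,v}$. Via the isometry $\Log$ of (\ref{LOGiso}), the quotient $(\R_{>0}^{\mathcal{N}}/\R_{>0},\dH)$ is isometric to the finite-dimensional normed space $\R^{\mathcal{N}}/\R\cht$, so it suffices to show that the image of $C$ there is bounded.

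Boundedness I would establish by contradiction. Suppose $\{y^{n}\} \subset C$ satisfies $\dH(\cht,y^{n}) \to \infty$. After rescaling so that $\topf(y^{n}) = 1$, one has $\botf(y^{n}) \to 0$, so a subsequence converges to some $\xi \in \R_{+}^{\mathcal{N}}$ with $\topf(\xi) = 1$ and $\botf(\xi) = 0$. Picking indices $i_0 \neq j$ (which exist since $N \geq 2$) with $\xi_{i_0} = 1$ and $\xi_j = 0$, and letting $u, v$ be the standard basis vectors supported at $i_0$ and $j$ respectively, one obtains a pair parametrizing an element of $\partial_H(\R_{>0}^{\mathcal{N}},\dH)$ via \Cref{horoHilbert}—indeed $\topf(u) = \topf(v) = 1$ and $u \odot v = 0\cht$—and $h_{u,v}(y^{n}) = \log(y^{n}_{i_0}/y^{n}_{j}) \to +\infty$, contradicting $y^{n} \in \mathcal{H}(h_{u,v},r_{u,v})$. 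The subtle step is exactly this admissibility check, which hinges on $i_0 \neq j$; beyond that the argument is straightforward.
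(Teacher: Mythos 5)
Your proof is correct and follows the route the paper intends: the paper gives no written proof of this lemma, merely asserting that it follows "by combining" inequality (\ref{horo}), the cited convexity of Hilbert horoballs, and the equivalence of the norm and $\dH$-topologies on the quotient, which is exactly the skeleton you flesh out. Your contradiction argument for $\dH$-boundedness (normalizing $\topf(y^{n})=1$, extracting a limit $\xi$ with $\botf(\xi)=0$, and testing against $h_{u,v}$ with $u,v$ the basis vectors at $i_0\neq j$) correctly supplies the one nontrivial detail the paper leaves to the reader.
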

We can now consider $T$ as a self-mapping of 
the compact metric space $(C,\dH)$. 
In order to prove that $T$ fixes a unique point 
in $C\subset\R_{>0}^{\mathcal{N}}/\R_{>0}$ we will need the
following. 
\begin{lem}\label{contractive}
Let $x\neq y$ in $\R_{>0}^{\mathcal{N}}/\R_{>0}$. 
Then $\dH(Tx,Ty) < \dH(x,y)$.
\end{lem}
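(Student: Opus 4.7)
The plan is to prove this directly by using the weighted-average representation of the ratio $(Tx)_i/(Ty)_i$. Writing Hilbert's projective metric in the equivalent form
\[
	\dH(x,y)=\log\topf(x\odot y^{-1}) - \log\botf(x\odot y^{-1}),
\]
I would set $M=\topf(x\odot y^{-1})$ and $m=\botf(x\odot y^{-1})$. The hypothesis $x\neq y$ in $\R_{>0}^{\mathcal{N}}/\R_{>0}$ means that $x$ is not a positive scalar multiple of $y$, which is equivalent to the ratios $x_k/y_k$ not being all equal; thus $m<M$ strictly.

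Next I would examine the ratio $(Tx)_i/(Ty)_i$ coordinatewise. Observe that
\[
	\frac{(Tx)_i}{(Ty)_i}=\frac{\sum_{k\in\mathcal{N}}T_{ik}x_k}{\sum_{k\in\mathcal{N}}T_{ik}y_k}
			=\sum_{k\in\mathcal{N}} w_{ik}\,\frac{x_k}{y_k},
			\qquad
			w_{ik}:=\frac{T_{ik}y_k}{\sum_{j\in\mathcal{N}}T_{ij}y_j}.
\]
Since $T_{ik}>0$ and $y_k>0$ for all $i,k$, each weight $w_{ik}$ is strictly positive and $\sum_k w_{ik}=1$, so $(Tx)_i/(Ty)_i$ is a genuine convex combination of the values $\{x_k/y_k\}_{k\in\mathcal{N}}$ with \emph{all} coefficients positive. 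Consequently the ratio lies strictly between the smallest and largest of the $x_k/y_k$, i.e. $m<(Tx)_i/(Ty)_i<M$ for each $i\in\mathcal{N}$.

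Taking maximum and minimum over $i$ gives $\topf(Tx\odot (Ty)^{-1})<M$ and $\botf(Tx\odot (Ty)^{-1})>m$, so
\[
	\dH(Tx,Ty)=\log\topf(Tx\odot (Ty)^{-1})-\log\botf(Tx\odot (Ty)^{-1})<\log M-\log m=\dH(x,y),
\]
which is the desired strict contraction. The only subtle point — and the one I would highlight carefully — is that strictness of $m<(Tx)_i/(Ty)_i$ and of $(Tx)_i/(Ty)_i<M$ both rely simultaneously on (i) positivity of every entry $T_{ik}$ (so no weight $w_{ik}$ vanishes) and (ii) the hypothesis that the ratios $x_k/y_k$ are not all equal (so the convex combination is nontrivial). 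Neither alone suffices, which is precisely why the claim fails for nonnegative (as opposed to strictly positive) matrices.
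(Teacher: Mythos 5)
Your proof is correct and takes essentially the same route as the paper: both arguments establish the coordinatewise strict inequalities $\botf(x\odot y^{-1})<(Tx)_i/(Ty)_i<\topf(x\odot y^{-1})$ using the strict positivity of every entry $T_{ik}$ together with the non-constancy of the ratios $x_k/y_k$, and then take the maximum and minimum over $i$. Your convex-combination packaging is merely a tidier phrasing of the paper's term-by-term estimate at the indices where the extreme ratios are attained; there is no gap.
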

\begin{proof}
If $x\neq y$ in $\R_{>0}^{\mathcal{N}}/\R_{>0}$, 
then there exist $r,s\in\mathcal{N}$ with $r\neq s$ such
that 
\[
	\botf(x\odot y^{-1})
	=\frac{x_r}{y_r}<\frac{x_s}{y_s}
	=\topf(x\odot y^{-1}).
\]
On the other hand, for every $i$,
\begin{align*}
		(Tx)_i &= T_{ir}x_r + T_{is}x_s + \sum_{\mathclap{\substack{k\in\mathcal{N} \\ k\neq r,s}}}T_{ik}x_k\\
				& < T_{ir}y_r\frac{x_s}{y_s} + T_{is}\frac{x_s}{y_s}y_s + 
				\sum_{\mathclap{\substack{k\in\mathcal{N} \\ k\neq r,s}}}T_{ik}\frac{x_k}{y_k}y_k \\
				& \leq T_{ir}y_r\frac{x_s}{y_s} + T_{is}\frac{x_s}{y_s}y_s + 
				\sum_{\mathclap{\substack{k\in\mathcal{N} \\ k\neq r,s}}}T_{ik}\frac{x_s}{y_s}y_k \\
				& =\frac{x_s}{y_s}(Ty)_i.
\end{align*}
The above implies that $\topf(Tx\odot (Ty)^{-1}) < \topf(x\odot y^{-1})$. 
In a similar way we can show that
$\botf(Tx\odot (Ty)^{-1})>\botf(x\odot y^{-1})$. 
Therefore,
\[
	\dfrac{\topf(Tx\odot (Ty)^{-1})}{\botf(Tx\odot (Ty)^{-1})} 
	< \dfrac{\topf(x\odot y^{-1})}{\botf(x\odot y^{-1})},
\]
and the result follows.
\end{proof}
\begin{rem}
Samelson \cite{Samelson1957} gives a different proof of \Cref{contractive} 
by applying projective properties of cross-ratios, which appear 
in Hilbert's original definition of $\dH$.
\end{rem}
Finally, by combining \Cref{compactcone} and \Cref{contractive} 
and applying Edelstein's fixed-point theorem \cite{Edelstein1962} 
we obtain the following.
\begin{cor}[Perron's theorem]
There exists a unique point $x^{*}$ in 
$C\subset\R_{>0}^{\mathcal{N}}/\R_{>0}$ 
such that $T(x^{*})=x^{*}$.
\end{cor}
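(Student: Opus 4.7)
The plan is to realize Perron's theorem as a direct application of Edelstein's fixed-point theorem to the restriction of $T$ to the compact, $T$-invariant set $C$ supplied by \Cref{compactcone}, using the strict contractivity established in \Cref{contractive}. The previous two lemmas do essentially all of the work; what remains is to assemble them and verify the hypotheses of Edelstein's theorem.

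First, I would note that $T$ descends to a well-defined self-mapping of the quotient $\R_{>0}^{\mathcal{N}}/\R_{>0}$: since $T$ is linear, $T(\beta x)=\beta\,Tx$ for every $\beta>0$, so the equivalence classes modulo positive scaling are preserved. On this quotient, Hilbert's projective pseudo-metric $\dH$ is a genuine metric (as observed in the discussion preceding \Cref{horoHilbert}), and by \Cref{compactcone} the set $C$ is a nonempty compact subset of $(\R_{>0}^{\mathcal{N}}/\R_{>0},\dH)$ satisfying $T(C)\subset C$. Hence the restriction $T\colon C\to C$ is a well-defined self-mapping of the compact metric space $(C,\dH)$.

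Next, \Cref{contractive} states that $\dH(Tx,Ty)<\dH(x,y)$ whenever $x\neq y$ in $\R_{>0}^{\mathcal{N}}/\R_{>0}$; in particular this inequality holds on $C$. Edelstein's fixed-point theorem \cite{Edelstein1962} asserts that any strictly contractive self-mapping of a compact metric space has a unique fixed point. Applying it to $T\colon C\to C$ yields a unique $x^{*}\in C$ with $T(x^{*})=x^{*}$, which is the conclusion of the corollary.

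Since the technical content is packaged into \Cref{compactcone} and \Cref{contractive}, there is no real obstacle here; the only point that deserves a sentence of care is verifying that $T$ induces an honest self-mapping of the quotient and that $\dH$ is a bona fide metric there, so that Edelstein's theorem applies verbatim.
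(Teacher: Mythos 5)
Your proposal is correct and follows exactly the paper's route: the paper likewise derives the corollary in one line by combining \Cref{compactcone} and \Cref{contractive} and invoking Edelstein's fixed-point theorem on the compact metric space $(C,\dH)$. The extra sentences you add checking that $T$ descends to the quotient and that $\dH$ is a genuine metric there are harmless and sensible, but do not change the argument.
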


\subsection*{Acknowledgements}
I would like to thank Prof. Anders Karlsson for suggesting the research topic, as well
as for his constant guidance, support and encouragement. I would also like to thank
Prof. Kalle Kyt\"ol\"a and Prof. Olavi Nevanlinna for several helpful comments and
suggestions.

\end{document}